\theoremstyle{plain}
\newtheorem{thm}{Theorem}[section]
\newtheorem{prop}[thm]{Proposition}
\newtheorem{lemma}[thm]{Lemma}
\newtheorem{cor}[thm]{Corollary}
\theoremstyle{definition}
\newtheorem{defn}{Definition}
\theoremstyle{remark}
\newtheorem{rem}{Remark}
\newcommand{\R}{\mathbf R}
\newcommand{\C}{\mathbf C}
\newcommand{\SU}{\mathrm{SU}}
\newcommand{\G}{\mathrm G}
\title[Moduli of Einstein-Hermitian harmonic mappings]
      {Moduli of Einstein-Hermitian harmonic mappings of the projective line into quadrics}
\author[Macia, Nagatomo]{Oscar Macia, Yasuyuki Nagatomo}
\address[OM]{Department of Mathematics, Faculty of Mathematical Sciences, UNIVERSITY OF VALENCIA,
C.Dr Moliner, 50, Burjassot, 46100, Valencia, SPAIN}
\email{oscar.macia@uv.es}
\address[YN]{Department of Mathematics, MEIJI UNIVERSITY, 
Higashi-Mita, Tama-ku, Kawasaki-shi, Kanagawa 214-8571, JAPAN} 
\email{yasunaga@meiji.ac.jp}
\subjclass[2010]{53C07, 58E20}
\begin{document}

\maketitle

\begin{abstract}
The present article studies the class of Einstein-Hermitian harmonic maps of constant K\"ahler angle from
the projective line into quadrics. 
We provide a description of their moduli spaces up to image, and gauge--equivalence using the language of
vector bundles and representation theory. It is shown that the dimension of the moduli spaces
is independent of the  Einstein-Hermitian constant,
and rigidity of the associated real standard, and totally real maps is examined. Finally, certain classical results
concerning  embeddings of two-dimensional spheres into spheres are rephrased and derived in our formalism.
\end{abstract}


\section{Introduction}
\indent Let $S,Q\to \mathrm{Gr}_{n}(\mathbf{R}^{n+2})$ 
denote respectively the tautological, and universal quotient bundles over the
complex hyperquadric.
Given a Riemannian manifold $(M,g)$ and a mapping $f:M\to \mathrm{Gr}_n(\mathbf{R}^{n+2}),$ write $V$ for $f^{-1}Q.$
Then, 
the {\it mean curvature operator of $f$} is the bundle endomorphism
$A\in \Gamma(\mathrm{End} V)$ defined \cite{Na-13} as:
\[A=\mathrm{Tr}(H \circ  K)\]
where $H,K$ are respectively the pull-backs of the second fundamental forms of 
$S,Q\to \mathrm{Gr}_{n}(\mathbf{R}^{n+2})$, 
and the trace of the $V$-valued two-tensor $H\circ K$  is taken with respect to $g.$
The mapping $f$ is said to be {\it Einstein-Hermitian} (EH, for short) if its mean curvature operator
satisfies the {\it strong} Einstein condition,
\begin{equation}\label{einstein}A=\varphi\; \mathrm{id}_V\end{equation}
for some constant $\varphi,$ which we term the EH-constant.
In the holomorphic setting $A$ coincides up to a sign with the mean curvature in the sense of Kobayashi \cite{Kob}, where
the strong Einstein condition was introduced to define the notion of Einstein--Hermitian vector bundle. Moreover,
(\ref{einstein}) characterises the minima of the functional 
\[\int_M |A|^2 dv_M\]
which generalises some instances of the Yang--Mills functional (\cite{Na-13} \S 4, \cite{Kob}, p.111)
Although we do not pursue the functional approach of EH maps in this article, many of
its properties are examined in \S 3.

The present work deals with the classification (up to suitable notions of equivalence)
of full, EH harmonic maps 
$\mathbf C\mathrm P^1\to \mathrm{Gr}_n(\mathbf R^{n+2})$ of degree $k$ and EH-constant $l,$
with constant K\"ahler angle. 
If the classifying criterium is gauge--equivalence of maps (resp. image--equivalence, aka congruence),
the resulting moduli space is  denoted  by $\mathcal M_{k,l}$ (resp. $\mathbf M_{k,l}$). These moduli spaces
are fully analysed in \S 4.
To avoid long repetitions we will write `($k$,EH($l$))  mapping'  instead of  `EH mapping of degree $k$ and EH-constant $l$'.

It has been conjectured from remarks in \cite{MaNaTa} that the moduli space
$\mathcal M_{k,l},$ 
and 
the moduli space $\mathcal M_k$
of gauge--equivalence classes of holomorphic isometric embeddings $\mathbf C\mathrm P^1\to\mathrm{Gr}_n(\mathbf R^{n+2})$
of degree $k,$ satisfying the gauge condition
are diffeomorphic.
The present work (\S 4) decides the question in the affirmative as a consequence of the\\

{\bf Main Theorem 1} (Theorem \ref{gmod1})
{\it  If $f:\C \mathrm P^1\to \mathrm{Gr}_n(\R^{n+2})$ is a full,
($k$,EH($l$)) harmonic map of constant K\"ahler angle, then
}
  \begin{enumerate}
\item  $n\leq 2(|k|+2l).$
  \end{enumerate}
  \emph{Assuming $n$ to be maximal, 
    let $\mathcal M_{k,l}$ be the moduli space up to gauge equivalence of maps,
    and denote  
    its closure by the inner product by $\overline{\mathcal M_{k,l}}.$ Then,}
\begin{enumerate}[resume]
\item $\mathcal M_{k,l}$ {\it can be regarded as an open bounded convex body 
in} \[\bigoplus_{r=l+1}^{2r\leqq |k|+2l}S^{2(|k|+2l-2r)} \mathbf C^2.\]
\item {\it The boundary points of $\overline{\mathcal M_{k,l}}$ describe those maps whose images 
  are included in some totally geodesic submanifold}
  \[\mathrm{Gr}_p(\mathbf R^{p+2})\subset \mathrm{Gr}_{2(|k|+2l)}(\mathbf R^{2(|k|+2l+1)}),\quad p<2(|k|+2l)\]
\item {\it The totally geodesic submanifold $\mathrm{Gr}_p(\mathbf R^{p+2})$ 
can be regarded as the common zero set of some sections of $Q\to \mathrm{Gr}_{2(|k|+21)}(\mathbf R^{2(|k|+2l+1)})$,
which belongs to $(\mathbf{R}^{p+2})^\perp\subset \Gamma( Q)$.}\\
\end{enumerate}

The above characterisation of $\mathcal M_{k,l}$ coincides with the description of $\mathcal M_k$
given in \cite{MaNaTa}, Theorem 7.4. The key tool in the proof of Main Theorem 1 is the so called {\it contraction operator} $\mathcal C,$
introduced in the same section.

Other connections between different moduli spaces are also studied in this article.
As further 
properties of the contraction operator are explored in \S 5, we use the {\it modified contraction
  operator} $\mathcal{\tilde C}$ 
to 
make clear the relation between moduli spaces $\mathcal M_{k,l}$ for different EH-constants. 
We prove\\

{\bf Main Theorem 2} (Theorem \ref{flight.theorem})
\emph{
There is a one-to-one correspondence 
  between $\mathcal M_{k,l}$ and $\mathcal M_{k,l-1}$ which associates the gauge--equivalence class of full ($k$,EH($l$)) harmonic maps
  determined by $T=(\mathrm{id} + D)^{\frac12}$ to the gauge--equivalence class of full ($k$,EH($l-1$)) harmonic maps
  determined by}
  \[\left(\mathrm{id}+\frac{|D|_{op}}{|\tilde{\mathcal C}^2(D)|_{op}}\tilde{\mathcal C}^2(D)\right)^{\frac12}\]
  \emph{where $\tilde{\mathcal C}$ denotes the modified contraction operator (\ref{modified}).}
  \newline
  
The following two sections describe rigidity results for certain classes of mappings naturally associated
to EH harmonic maps. \S 6 deals with the {\it real standard} map and concludes (Corollary 5.2) that
it is \emph{strongly} rigid, ie admits no deformation at the gauge--equivalence moduli space level.
{\it Totally real} (in the sense of Chen \& Ogiue \cite{Chen.Ogiue}) EH maps are discussed in  \S 7. Strong rigidity
of totally real, full, EH minimal immersions is proved (Theorem 7.1). The dependence of this last result with the strong Einstein condition for the mean curvature operator
is clarified.

Finally, \S 8 diverts from the abstract setting and turns to applications. 
We recover a well--known result by Ruh \& Vilms \cite{Ruh-Vil} on harmonicity of the Gauss map (Theorem 8.2) from our analysis of
the moduli spaces in \S 4.  From it, we are able to give new proofs of classical theorems by Calabi \cite{Cal}, and do Carmo \& Wallach \cite{DoC-Wal} regarding isometric minimal immersions of two-dimensional spheres into spheres (Theorem 8.1).
\let\thefootnote\relax\footnote{
{\it Acknowledgement}: The first--named author would like to thank the hospitality of
Meiji University where part of this work was developed.
The work of the first--named author was supported by the Spanish
Agency of Scienctific and Technological Research (DGICT) and FEDER project MTM2013窶�6961窶撤.
The work of the second--named author was supported by JSPS KAKENHI Grant Number 17K05230. 
}
\section{Mean curvature operator and do Carmo--Wallach theory}

Here we introduce some preparatory material needed in the remaining sections. To avoid repetition of what could be easily found elsewhere, the presentation does not intend to be exhaustive but builds on the preliminaries expounded in \cite{MaNaTa} \S 2. Hence, we make free use of concepts defined there, remarkably the notions of \emph{fullness}, \emph{induced map by $(V\to M,W)$}, \emph{standard map}, \emph{gauge equivalence of maps}, and \emph{image equivalence of maps}.

Let $W$ be a real (oriented) or complex $N$-dimensional vector space together with a fixed scalar product (an inner product or a Hermitian inner product).
Then, we have the following homomorphisms of vector bundles over the Grassmannian $\mathrm{Gr}_p(W)$ of (oriented) $p$-planes in $W$ 
\[\begin{tikzcd}
0\arrow{r} &
S
\rar[to-,
to path={
([yshift=0.5ex]\tikztotarget.west) --
([yshift=0.5ex]\tikztostart.east) \tikztonodes}][swap]{i_S}
\rar[to-,
to path={
([yshift=-0.5ex]\tikztostart.east) --
([yshift=-0.5ex]\tikztotarget.west) \tikztonodes}][swap]{\pi_S}
& 
\underline{W}
\rar[to-,
to path={
([yshift=0.5ex]\tikztotarget.west) --
([yshift=0.5ex]\tikztostart.east) \tikztonodes}][swap]{\pi_Q}
\rar[to-,
to path={
([yshift=-0.5ex]\tikztostart.east) --
([yshift=-0.5ex]\tikztotarget.west) \tikztonodes}][swap]{i_Q}
& 
Q \arrow{r} & 0
\end{tikzcd}\]
where $S\to\mathrm{Gr}_p(W)$ is the \emph{tautological bundle}, $Q\to \mathrm{Gr}_p(W)$ the \emph{universal quotient bundle}, and $\underline W\to \mathrm{Gr}_p(W)$ the trivial vector bundle with fibre $W.$ The natural injection $i_S,$ and projection $\pi_Q$ form a exact sequence.
Moreover,  $\pi_Q$ allows one to regard $W$ a subspace of  the space of sections $\Gamma(Q).$ 
The orthogonal projection defined via the scalar product on $W$ induces the bundle homomorphisms $\pi_S$ and $i_Q,$
together with fibre metrics $g_S,\;g_Q.$

Regarding sections $t\in \Gamma(Q)$
as $W$-valued functions $i_Q(t)$
the differential $di_Q(t)$ splits as
\[di_Q(t)=K(t)+\nabla^Q(t)=\pi_Sdi_Q(t) + \pi_Qdi_Q(t)\]
where $\nabla^Q=\pi_Qdi_Q$ 
is the so-called {\it canonical connection}, 
while the $(Q^{\ast}\otimes S)$ valued $1$-form
$K=\pi_Sdi_Q$ 
is {\it the second fundamental form} 
in the sense of Kobayashi \cite{Kob}. 
In a similar way, a connection 
$\nabla^S=\pi_Sd i_S$ and a second fundamental form $H=\pi_Q d i_S$
are defined. 
Note that from the identification of the (holomorphic) tangent bundle $T \to \mathrm{Gr}_p(W)$ 
 with $S^{\ast}\otimes Q\to\mathrm{Gr}_p(W),$ the Levi-Civita connection $\nabla$ is induced from 
$\nabla^S$ and $\nabla^Q$. The next Lemma then follows (cf \cite{Na-13}, \S 2):
\begin{lemma}\label{sksymHK} 
If $H,K$ are the second fundamental forms defined above,
  \begin{enumerate}
  \item 
    $\nabla H=\nabla K=0$
  \item 
$g_Q(H s, t)=-g_S(s,K t).
$
\end{enumerate}
\end{lemma}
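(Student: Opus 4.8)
The plan is to derive (2) first, as a direct consequence of the pointwise orthogonality $S\perp Q$ inside $\underline W$, and then to obtain (1) from the observation that the second fundamental forms are, under the canonical identifications, constant tensors. Throughout I write $\langle\cdot,\cdot\rangle$ for the fixed scalar product of $W$, also for the induced (parallel) fibre metric of $\underline W$, and I use the defining splittings $d(i_S s)=i_S(\nabla^S s)+i_Q(Hs)$ for $s\in\Gamma(S)$ and $d(i_Q t)=i_S(Kt)+i_Q(\nabla^Q t)$ for $t\in\Gamma(Q)$, which are precisely how $\nabla^S,H$ and $K,\nabla^Q$ were introduced.

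For (2): since $i_S s$ and $i_Q t$ take pointwise orthogonal values in $W$, one has $\langle i_S s,i_Q t\rangle\equiv 0$; differentiating along a tangent field $X$ and using that $\langle\cdot,\cdot\rangle$ is parallel gives $\langle d_X(i_S s),i_Q t\rangle+\langle i_S s,d_X(i_Q t)\rangle=0$. Inserting the two splittings, orthogonality of $S$ and $Q$ annihilates two of the four resulting inner products and the identity collapses to $g_Q(H_X s,t)+g_S(s,K_X t)=0$, which is (2). Running the same differentiation on $\langle i_S s,i_S s'\rangle=g_S(s,s')$ and on $\langle i_Q t,i_Q t'\rangle=g_Q(t,t')$ yields, as a by-product I will need below, the parallelism of the fibre metrics: $\nabla^S g_S=0$ and $\nabla^Q g_Q=0$.

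For $\nabla H=0$ I would make the isomorphism $T\cong S^{\ast}\otimes Q$ explicit — a tangent vector at a $p$-plane $P$ is the element of $\mathrm{Hom}(S_P,Q_P)$ obtained by differentiating a moving frame of $P$ and reducing mod $P$ — and observe that $H_X s=\pi_Q d_X(i_S s)$ is exactly the image of $s$ under $X$ regarded in $\mathrm{Hom}(S,Q)$; hence $H$, viewed in $T^{\ast}\otimes S^{\ast}\otimes Q\cong\mathrm{End}(S^{\ast}\otimes Q)\cong\mathrm{End}(T)$, is the identity endomorphism $\mathrm{id}_T$. The connection implicit in ``$\nabla H$'' is built from $\nabla$ on $T^{\ast}$ and from $\nabla^S,\nabla^Q$ on $S^{\ast},Q$; but since the Levi-Civita connection of $T=S^{\ast}\otimes Q$ is itself induced from $\nabla^S$ and $\nabla^Q$, this is exactly the connection on $\mathrm{End}(T)$ induced by $\nabla$, for which $\mathrm{id}_T$ is parallel. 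Thus $\nabla H=0$. Finally, (2) exhibits $K_X$ as the negative of the metric adjoint of $H_X$ (in the complex case, relating the $(1,0)$-part of $H$ to the $(0,1)$-part of $K$), so $K=-H^{\ast}$; since $g_S,g_Q$ (and the complex structure) are parallel, adjunction commutes with covariant differentiation, whence $\nabla K=-(\nabla H)^{\ast}=0$.

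The routine content is the pair of differentiation computations. The step that needs genuine care — and the main obstacle — is the identification $H=\mathrm{id}_T$: one must pin down the canonical isomorphism $T\cong S^{\ast}\otimes Q$ and check that $H$ matches it on the nose, with no stray scalar, and likewise that the connection used in ``$\nabla H$'' is literally the one on $\mathrm{End}(T)$. A cleaner but less self-contained alternative disposes of (1) at a stroke: $\mathrm{Gr}_p(W)$ is a Riemannian symmetric space whose canonical connection coincides with the Levi-Civita connection, and $H,K$ are $G$-invariant tensor fields built from the isotropy representation, hence automatically parallel — at the cost of invoking the standard fact that invariant tensors on a reductive homogeneous space are parallel for the canonical connection.
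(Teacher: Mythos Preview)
Your argument is correct. The paper does not actually prove this lemma: it states that it ``follows'' from the identification $T\cong S^{\ast}\otimes Q$ and the fact that the Levi-Civita connection is induced from $\nabla^S$ and $\nabla^Q$, and then refers the reader to \cite{Na-13}, \S 2. What you have done is precisely to unpack that hint: your derivation of (2) is the standard orthogonality computation, and your proof of (1) --- recognising $H$ as $\mathrm{id}_T$ under $T^{\ast}\otimes S^{\ast}\otimes Q\cong\mathrm{End}(T)$ and noting that the relevant connection on this bundle is the one induced by $\nabla$ --- is exactly the mechanism behind the sentence the paper leaves implicit. Your closing remark, that the symmetric-space viewpoint gives (1) for free via invariance, is the other standard route and is equally acceptable; the paper's reference likely contains one or both of these arguments.
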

Further properties of submanifolds with parallel second fundamental forms have been
studied using the present formalism in \cite{Kog-Nag}.

If $f:M \to \mathrm{Gr}_p(W)$ is smooth, 
we pull back $g_Q$ and $\nabla^Q$ to obtain a fibre metric $g_V$ 
and a connection $\nabla^{V}$ 
on the pull-back of the universal quotient bundle, denoted by $V\to M.$ 
The second fundamental forms $H,K$ are also pulled back 
and denoted by the same symbols.
If we restrict bundle-valued linear forms $H$ and $K$ on 
the pull-back bundle  $f^{-1}T^{\ast} \to M$ 
to linear forms on $M$,
then they are just 
the second fundamental forms  of subbundles 
$f^{-1}S \hookrightarrow \underline{W}$ and $V \hookrightarrow \underline{W}$,
where now $\underline{W}=M\times W$ with certain abuse of the notation.  
The bundle epimorphism $\pi_{V}:\underline{W}\to V$
defines a not necessarily injective linear map 
$W \to \Gamma(V).$
Even if this is the case, we shall still refer to $W\subset\Gamma(V)$
as a {\it space} of sections.

Next, assume that $(M,g)$ is an $m$-dimensional Riemannian manifold 
and let $\{e_i\}_{i=1,\cdots m}$ be an orthonormal frame 
of $M$.
The Riemannian structure on $M,$ 
and the pull-back connection on $V\to M$ 
define 
the Laplace operator $\Delta^V:\Gamma(V)\to\Gamma(V)$
\[ \Delta^V
=-\sum_{i=1}^n \nabla^{V}_{e_i}\left(\nabla^V\right)(e_i)\] 
We will also introduce 
a bundle homomorphism 
$A \in \Gamma\left(\text{End}\,V \right)$ 
defined as the trace of the composition of the 
second fundamental forms:
$$
A:=\sum_{i=1}^m H_{e_i}K_{e_i}.
$$
We call $A \in \Gamma\left(\text{End}\,V \right)$ 
the {\it mean curvature operator} 
of $f:M\to \mathrm{Gr}_p(W)$. The following properties of $A$ are easily proved (cf \cite{Na-13}, \S 3):

\begin{lemma} \label{Nonposed}Let $A$ be the mean curvature operator of $f:M\to \mathrm{Gr}_p(W)$ as defined above. Then, 
\begin{enumerate}
\item \label{Nonpos} 
  $A$ is a non-positive symmetric (or Hermitian) operator. 
\item \label{ed} 
  The energy density $e(f)$ is equal to $-\mathrm{Tr}\;A$.
\end{enumerate}
\end{lemma}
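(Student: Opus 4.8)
The plan is to deduce both assertions directly from Lemma~\ref{sksymHK}(2). Fix a point of $M$ and a $g$-orthonormal frame $\{e_i\}_{i=1}^{m}$ there. With respect to the pulled-back fibre metrics $g_S$ on $f^{-1}S$ and $g_V$ on $V$, the relation $g_V(Hs,t)=-g_S(s,Kt)$ of Lemma~\ref{sksymHK}(2), evaluated on each $e_i$, says exactly that the fibrewise adjoint of $H_{e_i}\colon f^{-1}S\to V$ is $H_{e_i}^{\ast}=-K_{e_i}$, and dually $K_{e_i}^{\ast}=-H_{e_i}$ (in the complex case one invokes conjugate-symmetry of the Hermitian metrics, the conclusion being unchanged). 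These two identities are the only input needed.

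For assertion (1), self-adjointness is checked termwise: $(H_{e_i}K_{e_i})^{\ast}=K_{e_i}^{\ast}H_{e_i}^{\ast}=(-H_{e_i})(-K_{e_i})=H_{e_i}K_{e_i}$, so each summand of $A=\sum_i H_{e_i}K_{e_i}$ is symmetric, Hermitian in the complex case, and hence so is $A$. Non-positivity then follows at once from
\[
g_V(At,t)=\sum_{i}g_V\!\bigl(H_{e_i}(K_{e_i}t),\,t\bigr)=-\sum_{i}g_S(K_{e_i}t,K_{e_i}t)\leq 0
\]
for every $t\in\Gamma(V)$, where the middle equality uses $H_{e_i}^{\ast}=-K_{e_i}$ (and one takes real parts in the Hermitian case, the expression being real anyway).

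For assertion (2), I would use the realisation of $f$ as the $\mathrm{End}(W)$-valued map sending a point to the orthogonal projection of $\underline W$ onto $f^{-1}S$ (equivalently onto $V$), together with the identification $T\mathrm{Gr}_p(W)\cong S^{\ast}\otimes Q$ recalled in \cite{MaNaTa}, \S 2 and \cite{Na-13}, \S 3: in this picture, relative to $\underline W=f^{-1}S\oplus V$, the differential $df(e_i)$ is anti-block-diagonal with blocks $H_{e_i}$ and $-K_{e_i}$, so that $|df|^{2}=\sum_i\bigl(|H_{e_i}|^{2}+|K_{e_i}|^{2}\bigr)$, the norms being Hilbert--Schmidt on the fibres. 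Since $K_{e_i}=-H_{e_i}^{\ast}$, cyclicity of the trace gives $|H_{e_i}|^{2}=\mathrm{Tr}(H_{e_i}^{\ast}H_{e_i})=-\mathrm{Tr}(K_{e_i}H_{e_i})=-\mathrm{Tr}(H_{e_i}K_{e_i})$, and likewise $|K_{e_i}|^{2}=-\mathrm{Tr}(H_{e_i}K_{e_i})$; summing over $i$ yields $|df|^{2}=-2\,\mathrm{Tr}\,A$, whence $e(f)=\tfrac12|df|^{2}=-\mathrm{Tr}\,A$.

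I do not expect a genuine obstacle here; the only point demanding care is the numerical bookkeeping in the last paragraph, namely that the metric on $\mathrm{Gr}_p(W)$ entering the definition of $e(f)$ is the one induced by the Hilbert--Schmidt product through the projection embedding into $\mathrm{End}(W)$, for which $|df|^{2}=|H|^{2}+|K|^{2}$. Granting that normalisation, everything reduces to formal manipulation of adjoints and traces, and the argument closes.
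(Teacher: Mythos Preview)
Your argument is correct and is essentially the standard one. The paper itself does not supply a proof of this lemma; it merely remarks that the properties ``are easily proved'' and refers the reader to \cite{Na-13}, \S3. Your derivation of (1) via the adjoint relation $H_{e_i}^{\ast}=-K_{e_i}$ coming from Lemma~\ref{sksymHK}(2) is exactly the intended route, and the non-positivity computation is clean.

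For (2), you have correctly isolated the only genuine issue: the normalisation of the Grassmannian metric. Your approach via the projection embedding $\mathrm{Gr}_p(W)\hookrightarrow\mathrm{End}(W)$ works, but it is slightly roundabout. A more direct path, in keeping with the paper's own identification $T\mathrm{Gr}_p(W)\cong S^{\ast}\otimes Q$, is to observe that under this isomorphism the pulled-back differential $df(e_i)$ is represented precisely by $H_{e_i}\in\Gamma((f^{-1}S)^{\ast}\otimes V)$; then $|df(e_i)|^{2}=|H_{e_i}|^{2}=\mathrm{Tr}(H_{e_i}^{\ast}H_{e_i})=-\mathrm{Tr}(K_{e_i}H_{e_i})=-\mathrm{Tr}(H_{e_i}K_{e_i})$, and summing gives $|df|^{2}=-\mathrm{Tr}\,A$. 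With the convention $e(f)=|df|^{2}$ (rather than $\tfrac12|df|^{2}$), which is the one in force in \cite{Na-13}, this yields the statement directly without passing through $\mathrm{End}(W)$. Your projection-embedding computation amounts to the same thing but doubles the metric (counting both $H$ and $K$ blocks) and then halves again via the $\tfrac12$; either bookkeeping is fine once the convention is fixed, and you were right to flag it.
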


We use the mean curvature operator $A$ to introduce the concept of 
{\it Einstein-Hermitian mapping}: 

\begin{defn}
Let $f$ be a map from a Riemannian manifold into a Grassmannian. Then, 
$f$ is {\it Einstein-Hermitian} 
(for short, EH map) if 
the mean curvature operator $A$ is proportional to the identity,
ie if \[A=-\mu \;\mathrm{id}_V\] for some non-negative constant $\mu$. 
\end{defn}

To end this section we state the harmonic version of the generalisation of the theorem of
do Carmo--Wallach for reader's benefit.

\begin{thm}\label{GenDW}\cite{Na-13}
  Let $M=G/K_0$ be a compact reductive Riemannian homogeneous space with decomposition 
$\mathfrak{g}=\mathfrak{k}\oplus \mathfrak{m}$. 
Fix a homogeneous complex line bundle 
$L=G\times_{K_0} V_0 \to M$ 
with 
invariant metric $h$ and 
canonical connection $\nabla$. 
Regard $L \to M$ as a real vector bundle with complex structure $J$.  
If
$f:M \to \mathrm{Gr}_n(\mathbf R^{n+2})$ is a full harmonic map 
satisfying the following two conditions:

\begin{enumerate}
\item[{$\mathrm{(G)}$}]
The pull-back 
$V \to M$ of the universal quotient bundle, 
with the pull-back metric, connection and complex structure  
is gauge equivalent to $L\to M$ with $h$, $\nabla$ and $J$.  

\item[{$\mathrm{(EH)}$}]
The mean curvature operator of $f$ equals $-\mu\;\mathrm{id}_V$
for some positive real number $\mu$, 
such that $e(f)=2\mu $. 
\end{enumerate}

Then there exist an eigenspace $W \subset \Gamma(V)$ of the Laplacian with eigenvalue 
$\mu,$  equipped with $L^2$-scalar product $(\cdot,\cdot)_W,$ 
and a semi-positive Hermitian endomorphism 
$T\in \text{\rm End}\,(W),$ such that 

\begin{enumerate}
\item[$\mathrm{(I)}$]
  The vector bundle $L\to M$ is globally generated by a subspace $\mathbf R^{n+2}$ of $W.$ Denote the inclusion
  by $\iota:\mathbf{R}^{n+2}\to W.$
\item[$\mathrm{(II)}$]
  As a subspace, $\mathbf R^{n+2}=
  (\ker T)^\bot$
  and the restriction 
$T|\mathbf{R}^{n+2}$ is a positive Hermitian transformation. 

\item[$\mathrm{(III)}$]
Regard $W$ as $\mathfrak{g}$-representation $(\varrho, W).$  
The endomorphism $T$ satisfies 
\begin{eqnarray*}\label{DW 2}
\left(T^2-\mathrm{id}_W, \mathrm{GH}(V_0, V_0)\right)_H&=&0 \\
\label{DW 2b}\left(T^2, \mathrm{GH}(\varrho(\mathfrak m)V_0, V_0)\right)_H&=&0
\end{eqnarray*}
where $V_0$ is regarded as a subspace of $W$ by Frobenius reciprocity and the presence of the scalar product
$(\cdot, \cdot)_W$. 

\item[$\mathrm{(IV)}$]
  The endomorphism $T$ determines an embedding
  \[\mathrm{Gr}_n(\mathbf R^{n+2})\hookrightarrow \mathrm{Gr}_{n^{\prime}}(W),\qquad n^{\prime}=n+
\dim\ker T \]
and 
a fixed bundle isomorphism 
$\phi:L\to V.$
\end{enumerate}
Then, $f:M \to \mathrm{Gr}_{n}(\mathbf R^{n+2})$ 
can be expressed as 
\begin{equation}\label{DW3} 
f\left([g]\right)=\left(\iota^{\ast}T\iota \right)^{-1}\left(f_0\left([g]\right)\cap (\ker T)^{\bot}\right), 
\end{equation}
where $\iota^{\ast}$ denotes the adjoint operator of $\iota$ under the scalar product on 
$\mathbf R^{n+2}$ induced from $(\cdot,\cdot)_W$ on $W$ and $f_0$ is the standard map by $W$. 
The pairs $(f_1, \phi_{1})$ and $(f_2, \phi_{2})$ are gauge equivalent if and only if 
$$
\iota_1^{\ast}T_1\iota_1=\iota_2^{\ast}T_2\iota_2, 
$$
where $(T_i,\iota_i)$ correspond to $f_i$ $(i=1,2)$ under the expression \eqref{DW3}, respectively. 
\end{thm}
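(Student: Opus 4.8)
The plan is to obtain the structure theorem as a specialisation of the general do Carmo--Wallach machinery already available in the vector-bundle formalism, adapted to the presence of the extra scalar product and the rank-$2$ target. First I would recall that a full harmonic map $f:M\to\mathrm{Gr}_n(\mathbf R^{n+2})$ pulls back the universal quotient bundle to $V\to M$ together with metric, connection and complex structure; by hypothesis $\mathrm{(G)}$ these data are gauge equivalent to the fixed homogeneous line bundle $L=G\times_{K_0}V_0$ with its invariant metric and canonical connection. The key preliminary step is then the standard argument (cf.\ \cite{Na-13}, \cite{MaNaTa} \S 2) that condition $\mathrm{(EH)}$, which fixes the mean curvature operator to be $-\mu\,\mathrm{id}_V$ and the energy density to be $2\mu$, forces the space of sections $\mathbf R^{n+2}\subset\Gamma(V)$ coming from $\pi_V$ to sit inside a single eigenspace $W$ of the Laplacian $\Delta^V$ with eigenvalue $\mu$: one uses Lemma \ref{Nonposed}(2) together with the Weitzenb\"ock-type identity relating $\Delta^V$, $A$ and the rough Laplacian on $W$-valued functions, exactly as in the scalar-bundle case. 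This produces $W$ and the inclusion $\iota:\mathbf R^{n+2}\hookrightarrow W$, and equips $W$ with the $L^2$ scalar product $(\cdot,\cdot)_W$.

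Next I would set up the positive Hermitian endomorphism $T$. Writing $f_0:M\to\mathrm{Gr}_{n'}(W)$ for the standard map by $W$ and recalling that $f$ is recovered from $f_0$ by post-composing with the embedding of $\mathrm{Gr}_n(\mathbf R^{n+2})$ into $\mathrm{Gr}_{n'}(W)$ determined by the inclusion $\iota$, one defines $T^2:=\iota\iota^{\ast}+(\text{projection onto }\ker T)$ up to the usual normalisation, so that $\mathbf R^{n+2}=(\ker T)^{\perp}$ and $T|\mathbf R^{n+2}$ is positive; this gives $\mathrm{(I)}$, $\mathrm{(II)}$, $\mathrm{(IV)}$ and formula \eqref{DW3} essentially by unwinding definitions of the standard map and of the induced map by $(V\to M,W)$. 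For $\mathrm{(III)}$ I would differentiate the identity \eqref{DW3}: the two conditions say, respectively, that the pulled-back metric $g_V$ on $V$ agrees with the one carried by $L$ (the ``zeroth order'' gauge condition) and that the pulled-back second fundamental form $K$ — equivalently $\nabla^V$ — agrees as well (the ``first order'' condition). Translating these into statements about $T$ requires evaluating $g_V$ and $K$ in terms of $f_0$ and $T$ at a point $[e]\in M$ and then sweeping over $M$ using $G$-equivariance; by Frobenius reciprocity the fibre $V_0$ is identified with a subspace of $W$, the evaluation map is governed by $\mathrm{GH}(V_0,V_0)$, and the derivative in the $\mathfrak m$-directions by $\mathrm{GH}(\varrho(\mathfrak m)V_0,V_0)$, yielding precisely the two displayed orthogonality relations for $T^2-\mathrm{id}_W$ and $T^2$.

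Finally, for the gauge-equivalence clause I would argue that two pairs $(f_1,\phi_1)$, $(f_2,\phi_2)$ are gauge equivalent precisely when there is a bundle isometry $V_1\to V_2$ intertwining all the pulled-back structures and compatible with $\phi_1,\phi_2$; since both maps are reconstructed from the \emph{same} standard map $f_0$ on $W$, such an isometry exists iff the two symmetric operators $\iota_1^{\ast}T_1\iota_1$ and $\iota_2^{\ast}T_2\iota_2$ on $\mathbf R^{n+2}$ coincide — one direction is immediate from \eqref{DW3}, and the converse follows because $\iota^{\ast}T\iota$ determines both the image of $f$ and the identification of $V$ with the pulled-back quotient bundle. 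The step I expect to be the main obstacle is the careful derivation of the two relations in $\mathrm{(III)}$: one must keep track of the interplay between the homogeneous structure, the Frobenius-reciprocity identification, and the scalar product $(\cdot,\cdot)_W$, and in particular verify that the ``$G$-averaging'' that turns a pointwise identity into the $\mathrm{GH}$-orthogonality is legitimate and produces exactly the stated endomorphisms rather than some twist of them; this is where the harmonic (as opposed to merely isometric) hypothesis and the eigenvalue normalisation $e(f)=2\mu$ are genuinely used.
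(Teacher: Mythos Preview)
The paper does not prove Theorem~\ref{GenDW} at all: it is quoted verbatim from \cite{Na-13} (``To end this section we state the harmonic version of the generalisation of the theorem of do Carmo--Wallach for reader's benefit''), and only the statement is reproduced, followed by the one-line remark that the converse also holds. There is therefore no in-paper proof to compare your proposal against.

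That said, your sketch is broadly in line with how the argument in \cite{Na-13} actually runs: the generalised Takahashi theorem forces $\mathbf R^{n+2}$ into a single Laplace eigenspace $W$; the operator $T$ is built so that $(\ker T)^\perp=\mathbf R^{n+2}$ and \eqref{DW3} holds; and the two orthogonality relations in $\mathrm{(III)}$ arise by imposing, at the base point $[e]$, that the induced fibre metric and connection on $V$ agree with those of $L$, then transporting by $G$-equivariance. Your identification of the potential obstacle --- the careful bookkeeping in deriving the $\mathrm{GH}(V_0,V_0)$ and $\mathrm{GH}(\varrho(\mathfrak m)V_0,V_0)$ conditions via Frobenius reciprocity and $G$-averaging --- is accurate; this is indeed where most of the work in \cite{Na-13} sits, and your sketch does not supply those computations. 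If you want a self-contained proof you will need to consult \cite{Na-13}~\S5 directly, since the present paper deliberately omits it.
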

The converse of the theorem is also true.
\begin{rem} 
  Let $\tau:\mathrm{Gr}_n(\mathbf R^{n+2}) \to \mathrm{Gr}_n(\mathbf R^{n+2})$ be the map obtained
  by switching the 
orientation of $n$-dimensional subspaces of $\mathbf R^{n+2}$. 
Then $\tau:\mathrm{Gr}_n(\mathbf R_+^{n+2}) \to \mathrm{Gr}_n(\mathbf R_-^{n+2})$ is an  
isometry. 
In the sequel, we do not distinguish a map $f:M \to \mathrm{Gr}_n(\mathbf R^{n+2})$ from 
a map $\tau\circ f:M \to \mathrm{Gr}_n(\mathbf R^{n+2})$. 
\end{rem}

\section{EH harmonic maps}

The universal quotient bundle $Q\to \mathrm{Gr}_n(\mathbf R^{n+2})$ has a holomorphic 
vector bundle structure induced by the canonical connection. 
To obtain a characterisation of  EH harmonic maps
into $\mathrm{Gr}_n(\mathbf R^{n+2})$
we use the natural embedding $i$ of $\mathrm{Gr}_n(\mathbf R^{n+2})$
into  
complex projective space $\mathbf C\mathrm P^n\cong\mathrm{Gr}_{n+1}(\mathbf C^{n+2}).$
Then, the pull-back of the hyperplane bundle $\mathcal O(1)\to \mathrm{Gr}_{n+1}(\mathbf C^{n+2})$ 
is just $Q\to \mathrm{Gr}_n(\mathbf R^{n+2}),$
and the pull-back connection 
is the canonical connection 
by equivariance of  $i.$
Moreover the embedding $i$ induces a real structure on $\mathbf C^{n+2}$, 
which is to be regarded as a space of holomorphic sections of $Q \to \mathrm{Gr}_{n+2}(\mathbf R^{n+2})$. 
The real structure distinguishes a real subspace $\mathbf R^{n+2}\subset \mathbf C^{n+2}$. 
We denote by $J$ the complex structure of $Q \to \mathrm{Gr}_{n+2}(\mathbf R^{n+2})$. 
Thus, if $t\in \mathbf R^{n+2}$, then $Jt \in \mathbf C^{n+2}$. 

\begin{prop}\label{compos}
Let $f:M\to \mathrm{Gr}_n(\mathbf{R}^{n+2})$ be a harmonic map. 
Then 
$f$ is 
EH harmonic 
if and only if 
the composition $i\circ f:M \to \mathrm{Gr}_{n+1}(\mathbf C^{n+2})$ is a harmonic map 
of constant energy density. 
\end{prop}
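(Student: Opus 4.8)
The plan is to reduce the statement to a comparison between the mean curvature operator of $f$ and the second fundamental form of the embedding $i$, exploiting that $i$ realises $\mathrm{Gr}_n(\R^{n+2})$ as the complex quadric inside $\mathrm{Gr}_{n+1}(\C^{n+2})$ (a complex projective space). First I would record the structural facts around $i$: it is an $\SO(n+2)$-equivariant holomorphic isometric embedding, so its second fundamental form $\beta=\nabla di$ is parallel and takes values in the (complex line) normal bundle $N$, which is the restriction of $\mathcal O(2)$; and, as noted in the text, $i^{-1}\mathcal O(1)=Q$ as a complex line bundle via $J$. Consequently, for $g:=i\circ f$ the pulled-back universal quotient bundle $V'=g^{-1}Q$ is nothing but $(V,J)$, a complex \emph{line} bundle over $M$. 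Its endomorphism bundle being trivial, Lemma~\ref{Nonposed}(2) forces the mean curvature operator of $g$ to equal $A_g=-e(g)\,\mathrm{id}_{V'}$ identically; hence every such $g$ is automatically Einstein--Hermitian, and ``$g$ has constant energy density'' is equivalent to ``$A_g$ is a constant multiple of $\mathrm{id}_{V'}$''.

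Next I would bring in the composition formula for tension fields. Since $f$ is harmonic, $\tau(i\circ f)=\mathrm{tr}_g\beta(df,df)\in\Gamma(f^{-1}N)$, so $g$ is harmonic if and only if $\sum_i\beta(df(e_i),df(e_i))=0$. The crucial step is to identify this trace with the trace-free part of $A_f$. For this I would use the canonical form of the second fundamental forms over a Grassmannian: up to a fixed non-zero constant $c$, $K_X$ is the metric adjoint of $X\in\mathrm{Hom}(S,Q)=T$, and $H_X=-(K_X)^{*}$ by Lemma~\ref{sksymHK}(2); therefore $A_f=\sum_iH_{df(e_i)}K_{df(e_i)}=-c^{2}\sum_idf(e_i)\,df(e_i)^{*}$ as a symmetric non-positive endomorphism of the rank-two bundle $V$. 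Symmetric endomorphisms of $V$ split as scalar plus trace-free, the trace-free ones being exactly those anticommuting with $J$ and hence identified with sections of $\mathrm{Sym}^2_0 V\cong (V,J)^{\otimes 2}=f^{-1}N$; tracing over the $S$-slots then shows that $\mathrm{tr}_g\beta(df,df)$ equals $\beta$ applied to the trace-free part of $-c^{-2}A_f$. Finally, $\beta$ restricted to the $\SO(2)$-weight-two summand $\mathrm{Sym}^2_0 Q\cong N$ is an $\SO(n)\times\SO(2)$-equivariant bundle map; since that summand occurs with multiplicity one in $\mathrm{Sym}^2 T$, Schur's lemma makes it either zero or an isomorphism, and it is non-zero because the quadric is not totally geodesic in $\mathrm{Gr}_{n+1}(\C^{n+2})$. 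Hence $\mathrm{tr}_g\beta(df,df)=0$ if and only if $A_f$ is, pointwise, a scalar multiple of $\mathrm{id}_V$.

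Combining these: if $f$ is EH harmonic, then $A_f=-\mu\,\mathrm{id}_V$ with $\mu$ constant, so the trace-free part of $A_f$ vanishes and $g$ is harmonic; moreover $e(g)=e(f)$ since $i$ is isometric, and $e(f)=-\mathrm{Tr}\,A_f=2\mu$ is constant, so $g$ has constant energy density. Conversely, if $g$ is harmonic with constant energy density, then harmonicity gives $\mathrm{tr}_g\beta(df,df)=0$, hence $A_f=\phi\,\mathrm{id}_V$ for a function $\phi\le0$; constancy of $e(g)=e(f)=-\mathrm{Tr}\,A_f=-2\phi$ forces $\phi$ constant, so $A_f=-\mu\,\mathrm{id}_V$ with $\mu=-\phi\ge0$ constant, i.e. $f$ is EH (and harmonic by hypothesis).

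The step I expect to be the main obstacle is the identification $\mathrm{tr}_g\beta(df,df)=(\text{non-zero constant})\cdot(\text{trace-free part of }A_f)$: one must carefully match the complex decomposition $Q\otimes\C=Q^{1,0}\oplus Q^{0,1}$ with the splitting of $\underline{\C^{n+2}}$ coming from $\mathrm{Gr}_{n+1}(\C^{n+2})$, keep track of the metric normalisation on $\mathrm{Gr}_n(\R^{n+2})$ that makes $i$ isometric, and verify that the relevant equivariant bundle map does not vanish (the multiplicity-one/Schur argument above, or else a direct local computation at one point using the homogeneous structure). Everything else is formal once this identity is in hand.
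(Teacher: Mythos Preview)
Your proof is correct, but it proceeds by a genuinely different route from the paper's. The paper never touches the second fundamental form $\beta$ of the embedding $i$ or the composition formula for tension fields. Instead it invokes the generalised Theorem of Takahashi from \cite{Na-13}, which characterises harmonicity of a map into a Grassmannian by the eigenequation $\Delta^V t + At = 0$ for all sections $t$ in the globally generating subspace. In the forward direction, the EH condition $A=-\mu\,\mathrm{id}_V$ gives $\Delta^V t = \mu t$ for every $t\in\R^{n+2}$; applying $J$ extends this to every $t\in\C^{n+2}$, and since $(i\circ f)^{-1}\mathcal O(1)=V$ with the same connection, Takahashi applied in the complex setting gives harmonicity of $i\circ f$, while Lemma~\ref{Nonposed}(\ref{ed}) gives constant energy density. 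The converse reverses this: rank one of $\mathcal O(1)$ plus constant energy density makes $i\circ f$ automatically EH, Takahashi then gives the eigenequation on $\C^{n+2}$, hence on $\R^{n+2}$, and Takahashi again (now in the real setting) shows $f$ is EH harmonic.

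Your approach trades the Takahashi machinery for the composition formula $\tau(i\circ f)=\mathrm{tr}_g\,\beta(df,df)$ and the identification of this trace with the trace-free part of $A_f$ via the $\SO(2)$-weight decomposition and Schur's lemma. This is more geometric and self-contained, and it makes transparent \emph{why} harmonicity of $i\circ f$ detects exactly the scalar nature of $A_f$: the normal bundle of the quadric in $\mathbf C\mathrm P^{n+1}$ literally carries the trace-free symmetric endomorphisms of $Q$. The paper's argument is shorter given that the Takahashi result is already available from \cite{Na-13}, and it sidesteps entirely the identification you flag as the main obstacle. Your Schur/multiplicity-one argument for that identification is sound; one small caveat is that the paper does not assert $i$ is isometric (only that it pulls back $\mathcal O(1)$ to $Q$ with the canonical connection), so your claim $e(g)=e(f)$ should more carefully read $e(g)=c\cdot e(f)$ for a fixed positive constant depending on the metric normalisations, which of course does not affect constancy.
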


\begin{proof}
Suppose that $f:M \to \mathrm{Gr}_n(\mathbf R^{n+2})$ is an EH harmonic map 
with $A=-\mu\; \mathrm{id}_V$. 
From the generalisation of the Theorem of Takahashi (\cite{Na-13}, \S 3), we see that 
\begin{equation}\label{ttaka} \Delta^V t=\mu t,\end{equation}
for any $t \in \mathbf R^{n+2}.$
Thus, using the complex structure $J$ of $Q\to \mathrm{Gr}_n(\mathbf R^{n+2})$, 
we have that the same is true for any $t\in\mathbf{C}^{n+2}.$
When $Q\to \mathrm{Gr}_n(\mathbf R^{n+2})$ is regarded as a complex vector bundle 
with the canonical connection, 
the pull-back bundle $V \to M$ is also regarded as the pull-back of 
$\mathcal O(1)\to \mathrm{Gr}_{n+1}(\mathbf C^{n+2})$ with the canonical connection 
by the composition of $i$ and $f$. 
Using again the generalisation of the Theorem of Takahashi, the induced map $i\circ f$ 
by the pair $(V\to M,\; \mathbf C^{n+2})$ is harmonic. 
Lemma \ref{Nonposed} yields that $i\circ f$ has  constant energy density. 

Conversely, suppose that 
$i\circ f:M \to \mathrm{Gr}_{n+1}(\mathbf C^{n+2})$ is a harmonic map with constant energy density. 
Since $\mathcal O(1)\to \mathrm{Gr}_{n+1}(\mathbf C^{n+2})$ is of complex rank $1$ 
and $i\circ f$ has constant energy density, 
$i\circ f$ is automatically an EH map with $A=-\mu\;\mathrm{id}_V$. 
It then follows from the generalisation of the Theorem of Takahashi that relation (\ref{ttaka}) 
holds for all $t\in\mathbf{C}^{n+2},$ and also for any $t\in\mathbf{R}^{n+2}\subset\mathbf{C}^{n+2}.$
Hence, if we regard $(i\circ f)^{-1}\mathcal O(1) \to M$ as a 
real vector bundle, 
then we recover $f:M \to \mathrm{Gr}_n(\mathbf R^{n+2})$, 
which is an EH harmonic map by 
the generalisation of the Theorem of Takahashi. 
\end{proof}

In what follows we particularise the previous theory to the case in which $M$ is the complex projective line $\mathbf C\mathrm P^1.$

Let $\mathcal O(k) \to \mathbf C\mathrm P^1$ be the 
holomorphic line bundle of degree $k$ over $\mathbf C\mathrm P^1$ equipped with the 
standard metric and 
canonical connection. 
Using the theory of spherical harmonics \cite{takeuchi}, 
we have a decomposition of $\Gamma(\mathcal O(k))$ in the $L^2$-sense:
\begin{equation}\label{declb1}
\Gamma(\mathcal O(k))=\sum_{l=0}^{\infty} S^{|k|+2l}\mathbf C^2.
\end{equation}
Moreover, $S^{|k|+2l}\mathbf C^2$ is an eigenspace of the Laplacian induced by the canonical connection, and its 
eigenvalue is $2\pi\{2l(|k|+l+1)+|k|\}.$

Let $f:\mathbf C\mathrm P^1\to\mathrm{Gr}_n(\mathbf R^{n+2})$ be a smooth map. 
Then, the pull-back 
of the universal quotient bundle is 
isomorphic, as a complex line bundle, to $\mathcal O(k)\to \mathbf C\mathrm P^1$ 
for some $k\in \mathbf Z,$ 
termed the {\it degree of $f.$} 
In addition,
we say 
that $f$ satisfies the {\it gauge condition} 
(with regard to $\mathcal O(k)\to \mathbf CP^1$ together with its canonical connection)
if there exists a bundle isomorphism preserving 
metrics and 
connections between 
$V \to \mathbf C\mathrm P^1$ and $\mathcal O(k)\to \mathbf C\mathrm P^1.$ 
This is just condition $\mathrm{(G)}$ in Theorem \ref{GenDW}.

Since the 
bundles are of complex rank one, 
a map 
of degree $k$ satisfies the gauge condition 
if and only if 
\begin{equation}\label{gauge.condition}f^{\ast}\omega_Q=k\omega_1\end{equation}
where $\omega_1$ is the fundamental two-form of $\mathbf C\mathrm P^1,$ and
$\omega_Q$ is the fundamental two-form of $\mathrm{Gr}_n(\mathbf R^{n+2}).$
\begin{rem}\label{rmrk}
If $f:\mathbf C\mathrm{P}^1 \to \mathrm{Gr}_n(\mathbf R^{n+2})$ 
is harmonic, then 
it is conformal
(cf Eells \& Lemaire \cite{E-L}). 
Having constant energy density, 
$f$ is an isometric immersion up to homothety. 
Then, it follows from harmonicity that $f$ is minimal. 
\end{rem}

The gauge condition for maps (\ref{gauge.condition}) is intimately related to the more familiar concept of 
K\"ahler angle (eg Chern \& Wolfson \cite{Chern.Wolfson}, Bolton, Jensen, Rigoli \& Woodward \cite{Bolton.etal}). In the following paragraphs we would like to clarify this relationship.

In the general situation, let $N$ be a K\"ahler manifold with metric $g_n$ and K\"ahler two-form $\omega_n,$ and let $ \phi:\mathbf C\mathrm P^1 \to N$ 
be a harmonic map with constant energy density $e(\phi).$ Regarded as an isometric immersion (up to homothety), it satisfies
$\phi^{\ast}g_n=mg_1$
for some positive number $m>0,$ where $g_1$ denotes the metric on $\mathbf C\mathrm P^1.$
Declaring $\{e_1, e_2\}=\{e,J_1e\}$ 
to be an oriented orthonormal (local) frame of $\mathbf C\mathrm P^1$ 
with respect to $g_1$,  
the homothety condition can be written as
\[
g_n\left(d\phi(e_i),d\phi(e_j)\right)=
m\delta_{ij},\qquad m>0.
\]
In the context of submanifold theory, the {\it K\"ahler angle} $\theta_\phi$ for the map $\phi,$ is defined by the relation
\[ \cos\,\theta_\phi=\frac{g_n\left(J_nd\phi(e), d\phi(J_1e) \right)}{|J_nd\phi(e)|_n|d\phi(J_1e)|_n}.\]
Now, further assume that $\phi$ is subject to a condition similar to (\ref{gauge.condition}). That is,
 to $\phi^*\omega_n=k\omega_1$ for some constant degree $k.$ Then, constancy of the K\"ahler angle follows
 \[ \cos\,\theta_\phi= \frac{k}{m}\]
and $\theta_\phi$ depends only on the degree $k$ and the EH constant $l.$

In the particular case of our interest, 
if $f:\mathbf C\mathrm P^1 \to \mathrm{Gr}_n(\mathbf R^{n+2})$ is 
an EH harmonic map, then 
by
Remark \ref{rmrk}, we can regard it
as an isometric immersion up to homothety.
The previous paragraph shows that for an EH harmonic mapping, satisfying the gauge condition (\ref{gauge.condition})
is equivalent to the more familiar requirement of constancy of the K\"ahler angle. Hence, statements in terms of this latter condition
are thus favoured in the sequel.

\begin{lemma}
  Let $f:\mathbf C\mathrm P^1\to \mathrm{Gr}_n(\mathbf R^{n+2})$ be an EH harmonic map of degree $k,$ 
  and of constant K\"ahler angle.   Then there exists a non-positive integer $l$ such that 
$e(f)=4\pi\{2l(|k|+l+1)+|k|\}$.  
\end{lemma}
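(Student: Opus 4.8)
The plan is to combine Proposition~\ref{compos} with the spectral decomposition \eqref{declb1} of the sections of the pulled-back line bundle. First I would recall that by Remark~\ref{rmrk} an EH harmonic map $f:\mathbf C\mathrm P^1\to\mathrm{Gr}_n(\mathbf R^{n+2})$ is an isometric immersion up to homothety, so that $e(f)$ is a (positive) constant, and by the generalisation of the Theorem of Takahashi quoted in the proof of Proposition~\ref{compos} we have $\Delta^V t=\mu\,t$ for every $t\in\mathbf R^{n+2}\subset\Gamma(V)$, where $A=-\mu\,\mathrm{id}_V$ and $e(f)=2\mu$. Since $f$ has degree $k$ and satisfies the gauge condition (this is exactly constancy of the K\"ahler angle, by the discussion preceding the lemma), the pulled-back universal quotient bundle $V\to\mathbf C\mathrm P^1$ is gauge equivalent to $\mathcal O(k)\to\mathbf C\mathrm P^1$ with its canonical connection; hence the Laplacian $\Delta^V$ is identified with the Laplacian on $\Gamma(\mathcal O(k))$ induced by the canonical connection.

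Next I would exploit that $\mathbf R^{n+2}$ generates $V\to\mathbf C\mathrm P^1$ (fullness), so $\mathbf R^{n+2}$ is a nonzero subspace of $\Gamma(\mathcal O(k))$ lying in the $\mu$-eigenspace of $\Delta^V$. By the spherical-harmonics decomposition \eqref{declb1}, every eigenvalue of this Laplacian is of the form $2\pi\{2l(|k|+l+1)+|k|\}$ for some non-negative integer $l$, the corresponding eigenspace being $S^{|k|+2l}\mathbf C^2$. Therefore $\mu=2\pi\{2l(|k|+l+1)+|k|\}$ for some such $l$, and since $e(f)=2\mu$ we conclude $e(f)=4\pi\{2l(|k|+l+1)+|k|\}$, which is the claimed formula. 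The sign convention in the statement (``non-positive integer $l$'') simply records that one may equally index the relevant summand by $-l$; I would note that the map $-l\mapsto l$ leaves the formula unchanged, or else adopt the convention from the earlier text that $l\ge 0$ and remark on the discrepancy.

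The only genuine points to check are that the eigenvalue estimate really applies: one must verify that the $L^2$-inner product induced on $\Gamma(V)$ from $g_V$ together with the volume of $\mathbf C\mathrm P^1$ matches, under the gauge equivalence, the $L^2$-inner product used in \eqref{declb1}, so that the spectrum is literally the one listed there (any overall rescaling of the metric would rescale all eigenvalues by the same factor and must be tracked through the normalisation of $\omega_1$). I would also confirm that $\mathbf R^{n+2}$ being a nonzero $\mu$-eigenspace forces $\mu$ to be one of the listed eigenvalues rather than, say, $0$ — but $\mu=0$ would give $e(f)=0$, contradicting that $f$ is an immersion. The main obstacle, such as it is, is thus bookkeeping with normalisation constants rather than anything conceptual; the substance of the argument is entirely contained in Proposition~\ref{compos} and the decomposition \eqref{declb1}.
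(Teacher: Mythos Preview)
Your argument is correct and essentially identical to the paper's: both use the generalised Takahashi theorem to see that $\mathbf R^{n+2}$ sits in a single eigenspace of the canonical Laplacian on $\Gamma(\mathcal O(k))$ (via the gauge condition), then read off $\mu$ from the spectral decomposition \eqref{declb1} and conclude via $e(f)=-\mathrm{Tr}\,A=2\mu$. Two small remarks: your opening mention of Proposition~\ref{compos} is not actually used, and your aside that $l\mapsto -l$ leaves $2l(|k|+l+1)+|k|$ unchanged is false---the ``non-positive'' in the statement is simply a typo for ``non-negative'', as you correctly suspect in your alternative.
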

\begin{proof}
Since $f$ is a harmonic map of degree $k$, 
it follows from the generalisation of the Theorem of Takahashi that 
$\Delta^V t+A t=0$ for 
any section $t$ of $\mathbf R^{n+2} \subset \Gamma(V)$. 
By the EH
condition, we have a non-negative $\mu$ such that 
$A=-\mu\; \mathrm{id}_V$. 
Combined with the gauge condition (\ref{gauge.condition}) implied by the constancy of the K\"ahler
angle assumption, $\mathbf R^{n+2} \subset \Gamma(V)$ 
can be considered as 
the eigenspace with eigenvalue $\mu$ of the Laplacian 
induced by the 
canonical connection, acting on $\Gamma(\mathcal O(k)).$ 
Thus, $\mu=2\pi\{2l(|k|+l+1)+|k|\}$, and Lemma \ref{Nonposed} (\ref{ed}) yields the result. 
\end{proof}

\begin{defn}
  Let $f: \mathbf C\mathrm P^1 \to \mathrm{Gr}_n(\mathbf R^{n+2})$ be an EH harmonic map of degree $k,$ and constant
  K\"ahler angle. 
If the energy density of $f$ is $4\pi\{2l(|k|+l+1)+|k|\},$
then $l$ is said to be the {\it Einstein-Hermitian constant of $f$} (EH constant, for short).
\end{defn}

\begin{rem}
In the above situation, $A=-2\pi\{2l(|k|+l+1)+|k|\}\mathrm{id}_{\mathcal O(k)}$. 
Hence, $2\pi\{2l(|k|+l+1)+|k|\}$ should more accurately be called 
the EH constant of $f,$ in accordance with what stated in \S 1. However, in what follows we adopt the above convention
by simplicity. 
Moreover, we will often be making reference to `Einstein-Hermitian harmonic
mappings of degree $k$ and Einstein-Hermitian constant $l$'. We will shorten this
to `($k,$EH($l$)) harmonic map', instead.
\end{rem}

\section{Moduli space by gauge, and image equivalence}

In this section we describe 
the moduli space of gauge equivalence classes of ($k$,EH($l$)) harmonic maps
$\mathbf C\mathrm P^1\to \mathrm{Gr}_n(\mathbf R^{n+2})$ of
constant K\"ahler angle.
  It is a direct application of the generalised version of the theory of do Carmo--Wallach \cite{Na-13}, \S 5. The
  particular version of the theorem needed has been introduced in \S 2 as Theorem \ref{GenDW}.

The proof of our Main Theorem 1 is  similar to the one of the Main Theorem 
in \cite{MaNaTa} except by the use made here of the {\it contraction operator}. 
Hence, we use the same notation and conventions used there without further explanation.
The representation theory needed can be consulted in \cite{MaNaTa} \S 4.

Let $W$ denote 
the space of the $l$-th
eigensections of $\mathcal O(k) \to \mathbf C\mathrm P^1,$ in the ordering defined by (\ref{declb1}), 
regarded as the $\text{SU}(2)$--representation $S^{|k|+2l}\mathbf C^2.$ 
It decomposes under the action of the subgroup $\mathrm U(1)$
as
\[W=\C_{-|k|-2l}\oplus\C_{-|k|-2l+2}\oplus\dots 
\oplus \C_{|k|+2l-2}\oplus\C_{|k|+2l}, \]
where $\mathbf C_{\lambda}$ denotes the irreducible $\mathrm{U}(1)$-module of weight $\lambda$. 

The homogeneous description of $\mathcal{O}(k)\to\mathbf C\mathrm P^1$ is
$\mathrm{SU}(2)\times_{\mathrm U(1)}V_0\to\mathbf C\mathrm P^1$ with $V_0=\mathbf C_{-k}\subset W.$
Following the generalisation of do Carmo--Wallach theory \cite{Na-13} \S 5, we shall regard the universal quotient
bundle as a real vector bundle of rank $2,$ and determine the subspaces 
$\G\mathrm{S}(V_0, V_0)$ and $\G\mathrm{S}(\mathfrak{m}V_0,V_0)$ of $\mathrm{S}(W).$
In the sequel
$V_0$ and $W$  shall stand either for the complex 
or 
underlying real vector spaces whenever
the meaning is clear. Since $\G\mathrm{H}(V_0,V_0)\subsetneq
\G\mathrm{S}(V_0,V_0)$, we have that 
$\mathrm{H}(W) \subset \G\mathrm{S}(V_0,V_0)$
and we must determine the intersections 
\[\G\mathrm{S}(V_0,V_0)\cap \sigma\mathrm{H}_+(W)\oplus J\sigma\mathrm{H}_+(W),\] 
and \[\G\mathrm{S}(\mathfrak{m}V_0,V_0)\cap \sigma\mathrm{H}_+(W)\oplus J\sigma\mathrm{H}_+(W).\]

\begin{lemma}
$\mathfrak{m}V_0=\mathbf C_{-k-2}\oplus \mathbf C_{-k+2}$. 
\end{lemma}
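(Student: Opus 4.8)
The plan is to compute the $\mathrm{U}(1)$-weights appearing in the tensor product $\mathfrak{m}\otimes V_0$ directly from the weight decompositions of the two factors. First I would recall that, in the reductive decomposition $\mathfrak{su}(2) = \mathfrak{k}\oplus\mathfrak{m}$ associated to $\mathbf{C}\mathrm{P}^1 = \mathrm{SU}(2)/\mathrm{U}(1)$, the isotropy module $\mathfrak{m}$ (complexified) is the holomorphic tangent space at the base point together with its conjugate; as a $\mathrm{U}(1)$-module it is $\mathfrak{m} = \mathbf{C}_{-2}\oplus\mathbf{C}_{2}$. This is standard and consistent with the identification of the tangent bundle $T\to\mathbf{C}\mathrm{P}^1$ with $S^*\otimes Q$ used earlier in the excerpt, together with the weight conventions fixed by the decomposition of $W = S^{|k|+2l}\mathbf{C}^2$ displayed just above the Lemma. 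Since $V_0 = \mathbf{C}_{-k}$ by the homogeneous description $\mathcal{O}(k) = \mathrm{SU}(2)\times_{\mathrm{U}(1)}\mathbf{C}_{-k}$, the product $\mathfrak{m}V_0 = \mathfrak{m}\otimes V_0$ has weights obtained by adding $-k$ to each weight of $\mathfrak{m}$, giving exactly $\mathbf{C}_{-k-2}\oplus\mathbf{C}_{-k+2}$.

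The key steps, in order, are: (i) fix the weight conventions so that $\mathfrak{m}$ is identified as $\mathbf{C}_{-2}\oplus\mathbf{C}_2$ — I would tie this to the already-stated decomposition of $W$ and to the identification $T\cong S^*\otimes Q$, so the signs are forced rather than chosen; (ii) recall $V_0 = \mathbf{C}_{-k}$ from the homogeneous model of $\mathcal{O}(k)$; (iii) observe that tensoring a one-dimensional $\mathrm{U}(1)$-module of weight $-k$ with $\mathbf{C}_{\pm 2}$ shifts weights additively, yielding $\mathbf{C}_{-k\pm 2}$; (iv) conclude $\mathfrak{m}V_0 = \mathbf{C}_{-k-2}\oplus\mathbf{C}_{-k+2}$ as a $\mathrm{U}(1)$-module.

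The only genuine obstacle is bookkeeping of signs and normalizations: one must make sure that the weight of $\mathfrak{m}$ is $\pm 2$ (not $\pm 1$, which would be the convention if one used the fundamental representation's weight as the unit) and that the sign of the weight of $V_0$ matches the convention in which $W$'s lowest weight is $-|k|-2l$. Both are pinned down by the displayed decomposition of $W$ above the Lemma and by the compatibility $\mathbf{C}_{-k} = V_0\subset W$, so once those conventions are made explicit the computation is immediate; there is no analytic content, only representation-theoretic accounting.
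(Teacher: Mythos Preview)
Your approach is essentially the same as the paper's: both identify $\mathfrak{m}\cong\mathbf C_{2}\oplus\mathbf C_{-2}$ as a $\mathrm U(1)$-module (the paper does this via $S^2\mathbf C^2|_{\mathrm U(1)}=\mathbf C_2\oplus\mathbf C_0\oplus\mathbf C_{-2}$ together with $(\mathbf C_0)^{\mathbf R}\cong\mathfrak u(1)$), then tensor with $V_0=\mathbf C_{-k}$ to obtain the weights $-k\pm 2$.

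The one point the paper makes explicit and you gloss over is that $\mathfrak{m}V_0$ is not literally $\mathfrak{m}\otimes V_0$ but its image under the action map $\mathfrak{m}\otimes V_0\to W=S^{|k|+2l}\mathbf C^2$; the paper phrases this as projecting $\mathfrak{m}\otimes V_0$ back into $W$ and writes $\mathfrak{m}V_0=(\mathfrak{m}\otimes V_0)\cap S^{|k|+2l}\mathbf C^2|_{\mathrm U(1)}$. For the conclusion to hold one needs both weights $-k\pm 2$ to actually occur in $W$ and the corresponding raising/lowering operators to act nontrivially on $\mathbf C_{-k}$; this is automatic precisely because $l\geq 1$ (so $\mathbf C_{-k}$ is neither the highest nor the lowest weight space of $W$), an assumption in force in the surrounding Proposition. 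You should add a sentence to this effect; otherwise your argument is correct and matches the paper's.
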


\proof
Decompose
$S^2\C^2$ 
into ${\rm U}(1)$-irreducible representation $S^2\C^2|{\rm U}(1)=\C_{2}\oplus\C_0\oplus \C_{-2}$
and using the real structure we have 
$(S^2\C^2)^\R\cong \mathfrak{su}(2),$ $(\C_0)^\R\cong \mathfrak{u}(1)$ therefore
$( \C_{2}\oplus\C_{-2})^\R\cong \mathfrak{m}.$ Then,
\[ \mathfrak m \otimes  V_0 = (\C_{2}\oplus \C_{-2})\otimes \C_{-k} = \C_{-k+2}\oplus\C_{-k-2}\]
The action of $\mathfrak m$ on $V_0$ is then obtained 
by projecting $\mathfrak m \otimes V_0$ back to $S^{|k|+2l}\C^2;$
therefore
\[\mathfrak{m}V_0= \left(\mathfrak{m}\otimes V_0 \right) \cap S^{|k|+2l}\C^2|{\rm U}(1) 
= \mathbf C_{-k-2}\oplus\C_{-k+2}\].
 \qed

\begin{prop}\label{intersection}
Let $W=S^{|k|+2l}\mathbf C^2$ be an irreducible 
 representation 
of $\text{SU}(2)$ 
with $k,l>0.$ 
Then 
$\G\mathrm{S}(\mathfrak{m}V_0,V_0)\cap \left(\sigma \mathrm{H}_+(W) \oplus J\sigma \mathrm{H}_+(W)\right)$ 
is the direct sum of 
representations $S^m \mathbf C^2$ of $\text{SU}(2)$ with $m < 4k$ 
appeared in 
\[
\sigma\mathrm{H}_+(S^{k+2l}\mathbf C^2)\oplus J\sigma 
\mathrm{H}_+(S^{k+2l}\mathbf C^2)
=\bigoplus_{r=0}^{2r\leqq k+2l} S^{2(k+2l)-4r}\mathbf C^2. 
\] 
\end{prop}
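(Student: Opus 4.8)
The plan is to compute both sides of the claimed equality by decomposing $\mathrm{S}(W)$ into $\mathrm{SU}(2)$-irreducibles and then identifying which summands actually survive the two constraints: being $\mathrm{SU}(2)$-equivariant on $\mathfrak m V_0$, and having the prescribed support. First I would recall the abstract setup: $W = S^{|k|+2l}\mathbf C^2$ as real vector space carries $\mathrm{S}(W) = \sigma\mathrm{H}(W)\oplus J\sigma\mathrm{H}(W)$, and by general $\mathrm{SU}(2)$-representation theory (cf. \cite{MaNaTa} \S 4) one has the Clebsch--Gordan decomposition
\[
\sigma\mathrm{H}(S^{k+2l}\mathbf C^2)\oplus J\sigma\mathrm{H}(S^{k+2l}\mathbf C^2)
= \bigoplus_{r=0}^{2r\leqq k+2l} S^{2(k+2l)-4r}\mathbf C^2,
\]
which is the displayed right-hand side. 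So the content is to show that $\G\mathrm S(\mathfrak m V_0, V_0)\cap(\sigma\mathrm H_+(W)\oplus J\sigma\mathrm H_+(W))$ is exactly the subsum of those $S^m\mathbf C^2$ with $m<4k$.

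The key step is to characterise $\G\mathrm S(\mathfrak m V_0, V_0)$ as a subspace of $\mathrm S(W)$. By the preceding Lemma, $\mathfrak m V_0 = \mathbf C_{-k-2}\oplus\mathbf C_{-k+2}$, a two-complex-dimensional subspace of $W$. An element of $\mathrm S(W)$ lies in $\G\mathrm S(\mathfrak m V_0, V_0)$ precisely when, under Frobenius reciprocity / the highest-weight analysis, it pairs nontrivially only against the $\mathrm{SU}(2)$-submodule generated by the relevant weight vectors; concretely I would use that for an irreducible summand $S^m\mathbf C^2\subset \mathrm S(W)$ to contribute to $\G\mathrm S(\mathfrak m V_0, V_0)$ it must contain a $\mathrm U(1)$-weight vector of the appropriate weight so that the $\mathrm{SU}(2)$-intertwiner $\mathfrak m\otimes V_0 \to W$, composed with the projection, is nonzero. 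This is where the weight bound enters: the weights appearing in $S^{|k|+2l}\mathbf C^2$ relevant to $\mathfrak m V_0$ are $\pm(k+2)$ and $\pm(k-2)$ (shifted appropriately), and a standard computation with lowering operators shows the image of the equivariant map hits $S^m\mathbf C^2$ nontrivially if and only if $m$ is small enough—this is exactly the inequality $m<4k$. I expect the main obstacle to be this bookkeeping: carefully tracking which irreducible $S^m\mathbf C^2$ in the Clebsch--Gordan decomposition of $W^*\otimes W$ meets the span of $\mathfrak m V_0$ in the right weight spaces, and confirming the cutoff is precisely $4k$ rather than an adjacent value.

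Finally I would assemble the pieces: intersecting the full list $\bigoplus_{r} S^{2(k+2l)-4r}\mathbf C^2$ with the condition $m<4k$ simply discards the low-$r$ terms (those with $2(k+2l)-4r \geqq 4k$, i.e. $r \leqq l-k/2$ roughly), leaving exactly the stated direct sum. One should double-check the boundary case $m=4k$ is genuinely excluded (strict inequality), which again reduces to a nonvanishing-of-intertwiner check; and verify compatibility with Lemma \ref{sksymHK}(2), namely that the skew-symmetry relation between $H$ and $K$ does not impose a further constraint beyond equivariance and support. Once these are in place the proposition follows directly from the $\mathrm{SU}(2)$-module identification, with no further analysis needed.
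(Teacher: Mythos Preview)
Your outline correctly identifies the overall strategy---decompose $\sigma\mathrm H_+(W)\oplus J\sigma\mathrm H_+(W)$ via Clebsch--Gordan and then determine which irreducible summands lie in $\G\mathrm S(\mathfrak m V_0,V_0)$---but the mechanism you propose for the second step does not work, and the paper's actual argument is substantially different.

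Your criterion (``$S^m\mathbf C^2$ contributes iff it contains a $\mathrm U(1)$-weight vector of the appropriate weight'') is only a necessary condition: every $S^m\mathbf C^2$ with $m$ large enough contains weight vectors of weights $\pm(2k\pm 2)$, so weight-space incidence alone cannot produce a cutoff. What one actually needs is to take explicit generators of $\mathrm S(\mathfrak m V_0,V_0)$---the paper writes down $X=\mathrm S(u_{-k+2},u_{-k})-\mathrm S(Ju_{-k+2},Ju_{-k})$ and a companion $Y$---and determine, summand by summand, onto which $S^{2(k+2l)-4r}\mathbf C^2$ they project nontrivially. That is a genuine computation, not bookkeeping, and your sketch does not supply a tool to carry it out.

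The paper's device for this is the \emph{contraction operator} $\mathcal C:S^n\mathbf C^2\otimes S^n\mathbf C^2\to S^{n-1}\mathbf C^2\otimes S^{n-1}\mathbf C^2$ built from the symplectic form, with $\ker\mathcal C=S^{2n}\mathbf C^2$. Iterating $\mathcal C$ peels off the top summand at each step, so $\mathcal C^{2r}(X)=0$ detects exactly which tail of the decomposition $X$ avoids. After identifying $X$ with $e_1^{k+l-1}e_2^{l+1}\otimes e_1^{k+l}e_2^l+e_1^{k+l}e_2^l\otimes e_1^{k+l-1}e_2^{l+1}$, an explicit formula for $\mathcal C^{2r}$ on symmetric tensors gives $\mathcal C^{2r}(X)=0$ iff $r\geq l+1$, whence $\G\mathrm S(\mathfrak m V_0,V_0)\perp\bigoplus_{r>l}S^{2(k+2l)-4r}\mathbf C^2$. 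This is the missing idea; without it (or an equivalent explicit projection computation) the cutoff cannot be established. Note also that the cutoff is governed by $l$ (it is $r\leq l$, equivalently $m\geq 2k$), so your heuristic ``$r\leq l-k/2$ roughly'' is pointing in the wrong direction.
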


Before proving Proposition \ref{intersection} we introduce a necessary technical tool, the {\it contraction operator} $\mathcal C$ and some of its properties.
Let $\{e_1,e_2\}$ be the standard basis of $\mathbf C^2,$ 
 that is,
$\{e_1,e_2\}$ is a unitary basis and satisfies $\omega(e_1,e_2)=1$ 
where $\omega$ is the invariant symplectic form on $\mathbf C^2.$   
 The totally symmetrised product of $n-p$ copies of $e_1$ and $p$ copies of $e_2$
  will be denoted by juxtaposition, $e_1^{n-p}e_2^p.$ 
Defining $u_{n-2p}=\binom{n}{p}^{-\frac{1}{2}}e_1^{n-p}e_2^p,$ 
$\{u_n, u_{n-2}, \cdots, u_{-n+2}, u_{-n}\}$ is a unitary, weight basis of 
$S^n\mathbf C^2$.
It is an easy matter to check that $S^{n}\mathbf C^2$ sits in $S^{n-1}\mathbf C^2\otimes \mathbf C^2$ (resp. in $\mathbf C^2\otimes S^{n-1}\mathbf C^2$) as follows:

\begin{eqnarray*}
e_1^{n-p}e_2^p &=&e_1^{n-p-1}e_2^p\otimes e_1 + e_1^{n-p}e_2^{p-1}\otimes e_2  \in S^{n-1}\mathbf C^2\otimes\mathbf C^2\\
             &=&e_1 \otimes e_1^{n-p-1} e_2^p+e_2\otimes e_1^{n-p} e_2^{p-1} \in \mathbf C^2\otimes S^{n-1}\mathbf C^2
\end{eqnarray*}
Next, we use the symplectic form 
$\omega$ to define an equivariant {\it contraction operator}
$\mathcal C:S^n\mathbf C^2 \otimes S^n\mathbf C^2\to  S^{n-1}\mathbf C^2 \otimes S^{n-1}\mathbf C^2$ given by:
  \[S^n\mathbf C^2\otimes S^n\mathbf C^2 \hookrightarrow S^{n-1}\C^2\otimes
  \overbracket{\mathbf C^2\otimes\mathbf C^2}^{\omega}
  \otimes S^{n-1}\mathbf C^2\to S^{n-1}\mathbf C^2\otimes S^{n-1}\mathbf C^2.\]
Explicitly,
\[
\mathcal{C}\left(e_1^{n-p}e_2^p \otimes e_1^{n-q} e_2^q\right) 
=e_1^{n-p-1} e_2^p\otimes  e_1^{n-q} e_2^{q-1}
-e_1^{n-p} e_2^{p-1}\otimes 
e_1^{n-q-1} e_2^q.
\]
It follows from the equivariance of $\mathcal C,$ and Schur's lemma that
\begin{equation}\label{Cker}\ker\,\mathcal{C}=S^{2n}\mathbf C^2.
\end{equation}
Moreover, explicit computation allows to establish the useful formula
\begin{align}\label{cont}
\mathcal{C}^{2r}\Bigl(e_1^{n-p} e_2^p  \otimes e_1^{n-q} e_2^q +e_1^{n-q} e_2^q \otimes &e_1^{n-p}e_2^p \Bigr) \\ 
=\sum_s (-1)^s\binom{2r}{s}   \Bigl(&e_1^{n-p-s} e_2^{p+s-2r}\otimes  e_1^{n-q+s-2r} e_2^{q-s}  \nonumber \\
                              +&e_1^{n-q+s-2r} e_2^{q-s}\otimes e_1^{n-p-1} e_2^{p+s-2r}\Bigr), \nonumber
 \end{align}%
where $s\leqq n-p, q,$ and $s\geqq 2r-p, 2r+q-n.$
If $s$ is not
in this range, 
the corresponding term is regarded as $0$. \\

\noindent{\it Proof of Proposition \ref{intersection}.} 

The real structure map of $\sigma H_+(W)\oplus J\sigma H_+(W),$
interchanges $\pm$ signs on weights. Therefore,  we can assume $k>0$ without loss of generality. Fix a positive
integer $k$ and let $W=S^{k+2l}\mathbf C^2.$
Notice that $\sigma H_+(W)\oplus J\sigma H_+(W)$ is a {\bf complex} $\text{SU}(2)$ representation
and so the following decomposition follows from Clebsch--Gordan formulae:
\[
\sigma\mathrm{H}_+(S^{k+2l}\mathbf C^2)\oplus J\sigma \mathrm{H}_+(S^{k+2l}\mathbf C^2)
=\bigoplus_{r=0}^{2r\leqq k+2l} S^{2(k+2l)-4r}\mathbf C^2, 
\]
Consequently, it
can be  identified with 
the space $S^2(S^{k+2l}\mathbf C^2)$ of symmetric powers of  $S^{k+2l}\mathbf C^2$.
Let us make the identification explicit.
Define $X\in \sigma H_+(S^{k+2l}\mathbf C^2)\oplus J\sigma H_+(S^{k+2l}\mathbf C^2)$ as
\[ X=S(u_{-k+2},u_{-k})-S(Ju_{-k+2},Ju_{-k}) 
\]
Since $\sigma(u_{n-2p})=(-1)^pu_{-n+2p}$, we have  
\begin{align*}
2\sigma X = &\langle \cdot, u_{-k+2}\rangle (-1)^{k+l}u_{k}
+\langle \cdot, u_{-k}\rangle (-1)^{k+l-1}u_{k-2} \\
&+\langle \cdot, Ju_{-k+2}\rangle (-1)^{k+l}Ju_{k}
+\langle \cdot, Ju_{-k}\rangle (-1)^{k+l-1}Ju_{k-2} \\
=&(-1)^{k+l}
\left(h(\cdot, u_{-k+2})u_{k}
-h(\cdot, u_{-k})u_{k-2}. \right) \\
=&(-1)^{k+1}\left(h(\cdot, \sigma (u_{k-2}))u_{k}
+h(\cdot, \sigma(u_{k}))u_{k-2} \right).
\end{align*}
Therefore, when we regard 
$\sigma H_+(W)\oplus J\sigma H_+(W)$ as a subspace of 
$S^{k+2l}\mathbf C^2 \otimes S^{k+2l}\mathbf C^2$, 
$X$ corresponds to 
$u_{k-2}\otimes u_k+u_k\otimes u_{k-2}$, that is 

\begin{eqnarray*}
X &\equiv& e_1^{k+2l-(l+1)}e_2^{l+1}\otimes e_1^{k+2l-l}e_2^{l}
+e_1^{k+2l-l}e_2^{l}\otimes e_1^{k+2l-(l+1)}e_2^{l+1}  \\
&=&e_1^{k+l-1} e_2^{l+1}\otimes e_1^{k+l} e_2^{l}
+e_1^{k+l} e_2^{l}\otimes e_1^{k+l-1} e_2^{l+1},
\end{eqnarray*}
up to a constant multiple.  Applying 
equation (\ref{cont}) to this last expression 
\[
\mathcal{C}^{2r}\left( e_1^{k+l-1}e_2^{l+1}\otimes e_1^{k+l} e_2^{l}
+e_1^{k+l} e_2^{l}\otimes e_1^{k+l-1} e_2^{l+1}\right)=0\qquad \text{iff\;} r\geqq l+1
\]
A similar argument is possible for $Y
= \mathrm{S}(u_{-k+2},Ju_{-k})+\mathrm{S}(Ju_{-k+2},u_{-k}).$
It then follows from (\ref{Cker}) that
$$
G\mathrm{S}(\mathfrak{m}V_0,V_0) \bot \bigoplus_{r>l}^{2r\leqq k+2l} S^{2(k+2l)-4r}\mathbf C^2. 
$$
\hfill \qed

\begin{cor}\label{intersection 2}
The orthogonal complement to $\G\mathrm{S}(\mathfrak{m}V_0,V_0) \oplus \mathbf{R}\,Id$
in $\mathrm{S}(W)$ is 
\[ \bigoplus_{r\geqq l+1}^{2r \leqq |k|+2l} S^{2|k|+4l-4r} \mathbf C^2\]
\end{cor}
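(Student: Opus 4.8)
The plan is to split $\mathrm S(W)$ according to the complex structure $J$ and to reduce the $J$-antilinear part to Proposition~\ref{intersection}. First I would record the orthogonal decomposition
\[\mathrm S(W)=\mathrm H(W)\ \oplus\ \bigl(\sigma\mathrm H_+(W)\oplus J\sigma\mathrm H_+(W)\bigr)\]
into the ($\mathbf C$-linear) Hermitian endomorphisms and the ($\mathbf C$-antilinear) symmetric ones. Orthogonality for the trace form is automatic, since the composite of a $\mathbf C$-linear and a $\mathbf C$-antilinear endomorphism is $\mathbf C$-antilinear and hence has vanishing real trace. As already noted before Proposition~\ref{intersection}, the antilinear summand, equipped with the complex structure coming from $J$, is the $\mathrm{SU}(2)$-module $S^2(S^{|k|+2l}\mathbf C^2)=\bigoplus_{r=0}^{2r\leqq|k|+2l}S^{2|k|+4l-4r}\mathbf C^2$, in which each isotypic piece occurs exactly once.

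Second, I would check that $\G\mathrm S(\mathfrak m V_0,V_0)$ is compatible with this decomposition. Conjugation $A\mapsto JAJ^{-1}$ is an $\mathrm{SU}(2)$-module involution of $\mathrm S(W)$ acting as $+\mathrm{id}$ on $\mathrm H(W)$ and as $-\mathrm{id}$ on $\sigma\mathrm H_+(W)\oplus J\sigma\mathrm H_+(W)$; it preserves $\G\mathrm S(\mathfrak m V_0,V_0)$ because $V_0$ and $\mathfrak m V_0$ are $J$-stable complex subspaces of $W$ and $J$ is $\mathrm{SU}(2)$-equivariant, so the generators $\mathrm S(w,v_0)$ ($w\in\mathfrak m V_0$, $v_0\in V_0$) are sent to themselves up to sign. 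Hence
\[\G\mathrm S(\mathfrak m V_0,V_0)=\bigl(\G\mathrm S(\mathfrak m V_0,V_0)\cap\mathrm H(W)\bigr)\oplus\bigl(\G\mathrm S(\mathfrak m V_0,V_0)\cap(\sigma\mathrm H_+(W)\oplus J\sigma\mathrm H_+(W))\bigr).\]
Furthermore the generators $\mathrm S(w,v_0)$ are traceless, because $\langle w,v_0\rangle=0$ (the weights occurring in $\mathfrak m V_0$ and in $V_0$ are distinct) and the trace is $\mathrm{SU}(2)$-invariant; so $\mathbf R\,\mathrm{Id}\perp\G\mathrm S(\mathfrak m V_0,V_0)$. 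Therefore the orthogonal complement we want is the orthogonal sum of the complement of $(\G\mathrm S(\mathfrak m V_0,V_0)\cap\mathrm H(W))\oplus\mathbf R\,\mathrm{Id}$ inside $\mathrm H(W)$ and the complement of $\G\mathrm S(\mathfrak m V_0,V_0)\cap(\sigma\mathrm H_+(W)\oplus J\sigma\mathrm H_+(W))$ inside $\sigma\mathrm H_+(W)\oplus J\sigma\mathrm H_+(W)$.

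The antilinear part is handled by Proposition~\ref{intersection}: the contraction-operator computation in its proof (using $\ker\mathcal C=S^{2(|k|+2l)}\mathbf C^2$ and formula \eqref{cont}) identifies $\G\mathrm S(\mathfrak m V_0,V_0)\cap(\sigma\mathrm H_+(W)\oplus J\sigma\mathrm H_+(W))$ with the sum of the isotypic pieces indexed by $r\leqq l$, so its complement inside $\sigma\mathrm H_+(W)\oplus J\sigma\mathrm H_+(W)$ is exactly $\bigoplus_{r\geqq l+1}^{2r\leqq|k|+2l}S^{2|k|+4l-4r}\mathbf C^2$. The main obstacle is the remaining assertion that the Hermitian part contributes nothing, i.e. that $\mathrm H(W)=\mathbf R\,\mathrm{Id}\oplus(\G\mathrm S(\mathfrak m V_0,V_0)\cap\mathrm H(W))$. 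Equivalently, the Hermitian ($\mathbf C$-linear) components of the generators — which, up to a scalar, are the operators $v\mapsto\langle v,w\rangle v_0+\langle v,v_0\rangle w$ with $w\in\mathfrak m V_0$, $v_0\in V_0$ — must generate under $\mathrm{SU}(2)$ every nontrivial irreducible summand of $\mathrm{End}_{\mathbf C}(W)=\bigoplus_{j=0}^{|k|+2l}S^{2(|k|+2l)-2j}\mathbf C^2$, the trivial summand $S^0\mathbf C^2=\mathbf R\,\mathrm{Id}$ being the only one missed. Because $l\geqq1$, the weight vectors $u_{-k}$ and $u_{-k\pm2}$ are interior ones, so the relevant Clebsch--Gordan projections — computed by applying lowering operators to highest weight vectors, exactly as in the proof of Proposition~\ref{intersection} — are all nonzero, which gives the claim; for $l=0$ this is the computation of \cite{MaNaTa}. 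Combining the two pieces then produces the stated orthogonal complement.
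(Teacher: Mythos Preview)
Your argument is sound and your overall architecture---split $\mathrm S(W)$ into its $J$-linear and $J$-antilinear parts, show $\G\mathrm S(\mathfrak m V_0,V_0)$ respects this splitting, and invoke Proposition~\ref{intersection} for the antilinear piece---is exactly the one the paper has in mind. The paper offers no separate proof of the corollary: it is meant to follow immediately from Proposition~\ref{intersection} together with the sentence preceding it, namely that $\mathrm H(W)\subset\G\mathrm S(V_0,V_0)$ (this is the content of $\G\mathrm H(V_0,V_0)=\mathrm H(W)$, i.e.\ Proposition~\ref{pre_rigidity}(1), which in turn is Theorem~5.4 of \cite{MaNaTa}).

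The one genuine difference is how you dispose of the Hermitian block. The paper kills $\mathrm H(W)$ via $\G\mathrm S(V_0,V_0)$: the rank-one projector onto $V_0$ is a $\mathrm U(1)$-invariant Hermitian operator whose projection to every irreducible summand of $\mathrm H(W)$ is nonzero, so its $\mathrm{SU}(2)$-span is all of $\mathrm H(W)$. You instead argue that the traceless Hermitian generators coming from $\mathfrak m V_0\times V_0$ already sweep out $\mathrm H_0(W)$. Your route is more faithful to the corollary's literal wording (which mentions only $\G\mathrm S(\mathfrak m V_0,V_0)\oplus\mathbf R\,\mathrm{Id}$ and not $\G\mathrm S(V_0,V_0)$), but it costs you an extra nonvanishing check: you must verify that the weight-$\pm2$ Hermitian elements $h(\cdot,u_{-k})u_{-k\pm2}+h(\cdot,u_{-k\pm2})u_{-k}$ project nontrivially to \emph{every} summand $S^{2j}\mathbf C^2$, $1\le j\le |k|+2l$, of $\mathrm H(W)$. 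You wave at this (``the relevant Clebsch--Gordan projections \ldots\ are all nonzero''), and it is true, but the paper's $V_0$-based argument avoids it entirely because a weight-zero rank-one projector manifestly has nonzero trace against each irreducible piece. Either way the conclusion is the same; the paper's shortcut is just slightly cleaner on the Hermitian side.
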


\begin{thm}\label{gmod1}
If $f:\C \mathrm P^1\to \mathrm{Gr}_n(\R^{n+2})$ is a full,
($k$,EH($l$)) harmonic map of constant K\"ahler angle, then
  \begin{enumerate}
\item[{$\mathrm{(1)}$}]  $n\leq 2(|k|+2l).$
  \end{enumerate}
  Assuming $n$ to be maximal,
  let $\mathcal M_{k,l}$ be the moduli space up to gauge equivalence of maps,
    and denote  
    its closure by the inner product by $\overline{\mathcal M_{k,l}}.$ Then,
\begin{enumerate}
\item[{$\mathrm{(2)}$}] $\mathcal M_{k,l}$  can be regarded as an open bounded convex body 
in \[\bigoplus_{r=l+1}^{2r\leqq |k|+2l}S^{2(|k|+2l-2r)} \mathbf C^2.\]
\item[{$\mathrm{(3)}$}]  The boundary points of $\overline{\mathcal M_{k,l}}$ describe those maps whose images 
  are included in some totally geodesic submanifold
  \[\mathrm{Gr}_p(\mathbf R^{p+2})\subset \mathrm{Gr}_{2(|k|+2l)}(\mathbf R^{2(|k|+2l+1)}),\quad p<2(|k|+2l).\]
\item[{$\mathrm{(4)}$}]  The totally geodesic submanifold $\mathrm{Gr}_p(\mathbf R^{p+2})$ 
can be regarded as the common zero set of some sections of $Q\to \mathrm{Gr}_{2(|k|+21)}(\mathbf R^{2(|k|+2l+1)})$,
which belongs to $(\mathbf{R}^{p+2})^\perp \subset \Gamma( Q)$.
\end{enumerate}
\end{thm}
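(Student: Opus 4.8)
The plan is to apply Theorem~\ref{GenDW} with $M=\mathbf C\mathrm P^1$, $G=\mathrm{SU}(2)$, $K_0=\mathrm U(1)$ and $L=\mathcal O(k)\to\mathbf C\mathrm P^1$, taking $W=S^{|k|+2l}\mathbf C^2$ to be the $l$-th eigenspace in the decomposition \eqref{declb1}. First I would invoke the generalisation of Takahashi's theorem together with the gauge condition \eqref{gauge.condition} (equivalent, by the discussion preceding the theorem, to constant K\"ahler angle) to conclude that every full $(k,\mathrm{EH}(l))$ harmonic map of constant K\"ahler angle arises from an endomorphism $T\in\mathrm{End}(W)$ satisfying the do Carmo--Wallach equations in $\mathrm{(III)}$; conversely every such $T$ with $\iota^{\ast}T^2\iota$ positive produces such a map. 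The equations in $\mathrm{(III)}$ say precisely that $T^2-\mathrm{id}_W$ is orthogonal to $\G\mathrm H(V_0,V_0)$ and to $\G\mathrm H(\varrho(\mathfrak m)V_0,V_0)$; passing to the underlying real picture (as in \S4) this means $D:=T^2-\mathrm{id}_W$ lies in the orthogonal complement of $\G\mathrm S(\mathfrak m V_0,V_0)\oplus\mathbf R\,\mathrm{id}$ inside $\mathrm S(W)$, which by Corollary~\ref{intersection 2} is exactly $\bigoplus_{r=l+1}^{2r\leqq|k|+2l}S^{2(|k|+2l-2r)}\mathbf C^2$. This gives (2): the moduli space is the set of admissible $D$ in this fixed space, a set which is convex (it is cut out by the condition $\mathrm{id}_W+D\geqq 0$, i.e. $T^2\geqq 0$, together with an openness/fullness condition), bounded (since $\|D\|_{op}\leq 1$ forces boundedness), and has nonempty interior (the standard map corresponds to $D=0$); part (1) follows because fullness forces $n+2\le\dim W=|k|+2l+1$ when $n$ is maximal, hence $n\le 2(|k|+2l)$ after accounting for the real rank-$2$ bookkeeping of \S3.

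For (3), the closure $\overline{\mathcal M_{k,l}}$ adds the $D$ for which $\mathrm{id}_W+D$ is only semi-positive, i.e. $\ker T\neq 0$. By $\mathrm{(II)}$ of Theorem~\ref{GenDW}, $\mathbf R^{n+2}=(\ker T)^{\perp}$, so a boundary point has $\dim\mathbf R^{p+2}=\dim W-\dim\ker T<\dim W$, and formula \eqref{DW3} then expresses $f$ as a map into $\mathrm{Gr}_p(\mathbf R^{p+2})$ for $p+2=\dim W-\dim\ker T$. One must check that the corresponding inclusion $\mathrm{Gr}_p(\mathbf R^{p+2})\hookrightarrow\mathrm{Gr}_{2(|k|+2l)}(\mathbf R^{2(|k|+2l+1)})$ is totally geodesic: this is the standard fact that a linear inclusion $\mathbf R^{p+2}\hookrightarrow\mathbf R^{n+2}$ induces a totally geodesic embedding of the corresponding Grassmannians, which follows from the homogeneity of the inclusion under the subgroup $\SO(p+2)\subset\SO(n+2)$ and the symmetric-space structure. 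Conversely, any map whose image lies in such a totally geodesic $\mathrm{Gr}_p(\mathbf R^{p+2})$ gives $T$ with $\ker T\supseteq(\mathbf R^{p+2})^{\perp}\neq 0$, hence a boundary point.

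For (4), note that the hyperplane bundle interpretation of \S3 realises $\mathbf R^{n+2}$, with $n+2=2(|k|+2l)+?$—more precisely $\mathbf R^{2(|k|+2l)+2}$ after the identifications of \S3—as a space of sections of $Q\to\mathrm{Gr}_{2(|k|+2l)}(\mathbf R^{2(|k|+2l+1)})$ via $\pi_Q$. A point $x$ of the Grassmannian lies in the totally geodesic submanifold $\mathrm{Gr}_p(\mathbf R^{p+2})$ (i.e. the $n$-plane $S_x$ is contained in $\mathbf R^{p+2}\otimes\mathbf C$, equivalently $S_x\perp(\mathbf R^{p+2})^{\perp}$) if and only if every section $t\in(\mathbf R^{p+2})^{\perp}\subset\mathbf R^{n+2}\subset\Gamma(Q)$, viewed as a $W$-valued function, has $i_Q(t)(x)\in S_x$, that is $\pi_{Q,x}(i_Q(t)(x))=t(x)=0$; so $\mathrm{Gr}_p(\mathbf R^{p+2})$ is exactly the common zero locus $\{x: t(x)=0\ \forall t\in(\mathbf R^{p+2})^{\perp}\}$. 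I would write this out as a short direct verification from the definitions of $S$, $Q$, $\pi_Q$ given in \S2.

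\textbf{Main obstacle.} The routine parts are the convexity/boundedness/openness bookkeeping for (2) and the totally geodesic verification in (3)--(4). The genuinely delicate point is the interplay between the \emph{real} rank-$2$ description forced by the target $\mathrm{Gr}_n(\mathbf R^{n+2})$ (and the complexification to $\mathbf C\mathrm P^n$ of Proposition~\ref{compos}) and the \emph{complex} do Carmo--Wallach data: one must verify that the relevant space for $D$ is the \emph{real} span $\bigoplus_{r\geq l+1}S^{2(|k|+2l-2r)}\mathbf C^2$ and that the positivity/fullness conditions translate correctly across this real-complex passage, so that the dimension count in (1) comes out as $n\le 2(|k|+2l)$ rather than off by a factor. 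Making the identification $\sigma\mathrm H_+(W)\oplus J\sigma\mathrm H_+(W)\cong S^2(S^{|k|+2l}\mathbf C^2)$ of \S4 precise, and tracking the contraction operator $\mathcal C$ through Proposition~\ref{intersection} and Corollary~\ref{intersection 2}, is where the real work lies; the rest is an application of Theorem~\ref{GenDW}.
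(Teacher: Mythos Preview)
Your proposal is correct and follows essentially the same route as the paper: apply Theorem~\ref{GenDW} to the eigenspace $W=S^{|k|+2l}\mathbf C^2$, use Corollary~\ref{intersection 2} to identify the orthogonal complement in which $D=T^2-\mathrm{id}_W$ must lie, and read off the convex-body/boundary description from the positivity condition on $T$. In fact the paper's own proof is a single sentence for part~(1) (fullness and (I) of Theorem~\ref{GenDW} bound $n+2$ by the \emph{real} dimension $2(|k|+2l+1)$ of $W$), while (2)--(4) are left to the remark immediately afterward, which declares the argument identical to that for $\mathcal M_k$ in \cite{MaNaTa}; your outline simply makes explicit what the paper delegates to that reference.
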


\begin{proof}
The restriction $n\leq 2(|k|+2l)$ follows from (I) in Theorem \ref{GenDW} and the dimension of the corresponding eigenspace.
\end{proof}

\begin{rem}
  The previous theorem establishes a diffeomorphism between the moduli space $\mathcal{M}_{k,l}$ of full, ($k$,EH($l$)) harmonic mappings
 of constant K\"ahler angle  and the moduli space $\mathcal{M}_k$ of full, holomorphic isometric embeddings of degree $k.$ Due to this,
    proofs of propositions about $\mathcal M_{k,l}$ are, with minor changes, identical to proofs about $\mathcal M_k.$ We now recall some important properties of the moduli space $\mathbf M_{k,l},$ which in virtue of the previous Theorem are derivative from those in \cite{MaNaTa} \S 8. \end{rem}

The complex structure on 
$Q\to \mathrm{Gr}_{2(|k|+2l)}(\mathbf R^{2(|k|+2l+1)})$ induces a similar one on $\mathcal M_{k,l},$
so it is 
a complex 
 submanifold of $\oplus _{r=l+1}^{2r \leqq |k|+2l}S^{2|k|+4l-4r}\mathbf C^2.$  
 As 
 the centraliser of the holonomy group acts on $\mathcal M_{k,l}$ 
with weight $-k,$ we get

\begin{thm}\label{imod}
Let $\mathbf M_{k,l}$ be the moduli space (up to image equivalence) of full, (k,EH(l)) harmonic maps $\mathbf C \mathrm{P}^1\to \mathrm{Gr}_{2(|k|+2l)}(\mathbf R^{2(|k|+2l+1)})$ 
of constant K\"ahler angle. 
Then $\mathbf M_{k,l}=\mathcal M_{k,l}\slash S^1$. 
\end{thm}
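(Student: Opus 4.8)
The plan is to identify the $S^1$ action explicitly and show that the quotient of $\mathcal M_{k,l}$ by it is precisely $\mathbf M_{k,l}$. First I would recall from the do Carmo--Wallach description (Theorem \ref{GenDW}) that a gauge--equivalence class in $\mathcal M_{k,l}$ is encoded by the endomorphism $T$, equivalently by $D = T^2 - \mathrm{id}$ lying in the convex body inside $\bigoplus_{r=l+1}^{2r\leqq|k|+2l} S^{2(|k|+2l-2r)}\mathbf C^2$ guaranteed by Theorem \ref{gmod1}(2). The key point is that image equivalence is a coarser relation: two maps $f_1, f_2$ are image--equivalent precisely when they differ by gauge equivalence \emph{together with} an isometry of the target $\mathrm{Gr}_{2(|k|+2l)}(\mathbf R^{2(|k|+2l+1)})$ that fixes the relevant structure, and the extra freedom beyond gauge equivalence is exactly the action of the centraliser of the holonomy group of $V\to\mathbf C\mathrm P^1$.

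The main step is to pin down this centraliser. Since $V\to\mathbf C\mathrm P^1$ is gauge equivalent to $\mathcal O(k)\to\mathbf C\mathrm P^1$, a complex line bundle, its holonomy group sits inside $\mathrm U(1)$, and the centraliser (in the structure group acting on the real rank-$2$ bundle) of the holonomy is the full $\mathrm U(1)\cong S^1$ of unit complex scalars. I would then trace how this $S^1$ acts on the classifying data: multiplication of sections of $\mathcal O(k)$ by a unit scalar $e^{i\theta}$ acts on the eigenspace $W = S^{|k|+2l}\mathbf C^2$, hence on $\mathrm{End}(W)$, hence on the parameter $D$. Because the centraliser acts on $\mathcal M_{k,l}$ \emph{with weight $-k$} — as already asserted in the paragraph preceding the statement, coming from the $\mathrm U(1)$-weight decomposition of $W$ and the fact that $V_0 = \mathbf C_{-k}$ — the induced action on each summand $S^{2(|k|+2l-2r)}\mathbf C^2$ is by the character $e^{-ik\theta}$ (up to the normalisation fixed there), so in particular it is a genuine $S^1$-action on the convex body, linear and fixing the origin.

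With the action identified, the proof assembles as follows: (i) two $(k,\mathrm{EH}(l))$ harmonic maps of constant K\"ahler angle are image equivalent iff they are gauge equivalent after composing one of them with an isometry of the target that restricts to a bundle automorphism of $V$ lying in the centraliser of the holonomy, which is the statement that image equivalence $=$ gauge equivalence modulo this $S^1$; (ii) under the do Carmo--Wallach parametrisation this $S^1$ acts on $\mathcal M_{k,l}$ as the weight-$(-k)$ circle action just described; (iii) therefore $\mathbf M_{k,l} = \mathcal M_{k,l}/S^1$. Here one uses that $k\neq 0$ (for $k=0$ the action is trivial, but then the map is constant and excluded by fullness), so the weight is nonzero and the action is effective.

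The hard part will be step (i): rigorously matching the geometric notion of image equivalence (congruence of submanifolds of the Grassmannian, up to the orientation-switching identification in the Remark following Theorem \ref{GenDW}) with the algebraic statement that the only residual freedom in the do Carmo--Wallach data, beyond what gauge equivalence already kills via the condition $\iota_1^*T_1\iota_1 = \iota_2^*T_2\iota_2$, is precisely this centraliser $S^1$. This requires carefully controlling how an ambient isometry acts on the standard map $f_0$ by $W$ and on the inclusion $\iota:\mathbf R^{n+2}\to W$, and verifying that isometries not coming from the centraliser either do not preserve the $\mathrm{SU}(2)$-homogeneous structure or already coincide with gauge transformations. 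Since the excerpt explicitly states this is ``derivative from those in \cite{MaNaTa} \S 8'' via the diffeomorphism $\mathcal M_{k,l}\cong\mathcal M_k$ of the preceding Remark, I would ultimately reduce (i) to the corresponding statement for holomorphic isometric embeddings established there, transporting it along that diffeomorphism and checking the $S^1$-action is intertwined — the weight $-k$ being the same on both sides because the centraliser and the homogeneous bundle $\mathcal O(k)$ are literally the same objects in both moduli problems.
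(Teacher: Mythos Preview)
Your proposal is correct and matches the paper's approach exactly: the paper also simply notes that the centraliser of the holonomy group acts on $\mathcal M_{k,l}$ with weight $-k$ and then defers to \cite{MaNaTa} \S 8 via the diffeomorphism $\mathcal M_{k,l}\cong\mathcal M_k$ from the preceding Remark, supplying no more detail than you have sketched. One small correction to your parenthetical: for $k=0$ the map is \emph{not} constant (these are the totally real immersions of \S 7); rather, the summation range $l+1\leq r\leq l$ in Theorem~\ref{gmod1}(2) is empty, so $\mathcal M_{0,l}$ reduces to a single point and the statement is vacuous there.
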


Again, as in the holomorphic isometric embedding case,
 $\mathcal M_{k,l}$ 
is a K\"ahler manifold together with an $S^1$--action preserving the K\"ahler structure,
and  is therefore 
equipped with the moment map 
$\mu=|T|^2.$

\begin{cor}\label{imodcor}
  There exists a one--parameter family $\{f_t\},\;t\in[0,1],$ of $\SU(2)$--equivariant image--inequivalent
  isometric minimal immersions of even degree  of  $\C \mathrm P^1$ into complex quadrics where $f_0$ corresponds to
  the standard map.
\end{cor}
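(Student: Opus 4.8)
The plan is to realise the family as a straight segment through the origin of the moduli space $\mathcal M_{k,l}$ of Theorem \ref{gmod1}, moving only along directions fixed by $\SU(2)$, so that equivariance of the resulting maps comes for free. First I would fix an even integer $k$ with $|k|\geq 2$ and, for concreteness, take EH constant $l=0$. By Theorem \ref{gmod1}, for the maximal value $n=2|k|$ the moduli space $\mathcal M_{k,0}$ of full $(k,\mathrm{EH}(0))$ harmonic maps of constant K\"ahler angle into the complex quadric $\mathrm{Gr}_{2|k|}(\R^{2|k|+2})$ is an open bounded convex body containing the origin in $\mathcal V:=\bigoplus_{r=1}^{2r\leq|k|}S^{2(|k|-2r)}\C^2$, the origin being the standard map $f_0$ (the point $T=\mathrm{id}_W$ of Theorem \ref{GenDW}). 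The key observation is that, $|k|$ being even and $\geq 2$, the index $r=|k|/2$ is admissible, so $\mathcal V$ contains the trivial $\SU(2)$-summand $S^0\C^2$; by the identification of $\mathcal V$ with a summand of $\mathrm S(W)$ from Corollary \ref{intersection 2}, the nonzero elements of $S^0\C^2$ are exactly the $\SU(2)$-equivariant self-adjoint endomorphisms of $W=S^{|k|}\C^2$ that are not multiples of the identity; these are the deformation directions I would use.

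Next I would pick $D\neq 0$ in the open convex neighbourhood $\mathcal M_{k,0}\cap S^0\C^2$ of $0$ and set $D_t:=tD$ for $t\in[0,1]$; convexity keeps $D_t$ inside $\mathcal M_{k,0}\cap S^0\C^2$, and $\mathrm{id}+D_t=(1-t)\mathrm{id}+t(\mathrm{id}+D)$ stays positive definite, so $T_t:=(\mathrm{id}+D_t)^{\frac12}$ is invertible. Let $f_t$ be the associated map of Theorem \ref{GenDW}. Then $f_0$ is the standard map, and each $f_t:\C\mathrm P^1\to\mathrm{Gr}_{2|k|}(\R^{2|k|+2})$ is a full $(k,\mathrm{EH}(0))$ harmonic map of constant K\"ahler angle; by Remark \ref{rmrk} it is conformal of constant energy density, that is, an isometric (up to homothety) minimal immersion of $\C\mathrm P^1$ into the complex quadric. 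Since $T_t$ is invertible, the normal form \eqref{DW3} collapses to $f_t([g])=T_t^{-1}\bigl(f_0([g])\bigr)$; as $D_t$ is $\SU(2)$-invariant, $T_t$ commutes with the $\SU(2)$-action on $W$, and together with the $\SU(2)$-equivariance of the standard map $f_0$ this makes every $f_t$ $\SU(2)$-equivariant.

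For image-inequivalence I would invoke Theorem \ref{imod}: $\mathbf M_{k,0}=\mathcal M_{k,0}/S^1$, where the circle $S^1$ (centraliser of the holonomy group) acts linearly on $\mathcal V$ and hence, being compact, preserves a suitable inner product on it. Consequently $\|D_t\|=t\|D\|$ is constant along each $S^1$-orbit, so the $D_t$ lie in pairwise distinct orbits; therefore the $f_t$ represent pairwise distinct classes of $\mathbf M_{k,0}$, that is, they are pairwise image-inequivalent, and $\{f_t\}_{t\in[0,1]}$ is the desired family.

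The step I expect to demand the most care is the equivariance claim: one must verify that $\SU(2)$-invariance of $T$ (equivalently of $D$) is precisely what turns \eqref{DW3} into an equivariant map, and ideally that no other directions in $\mathcal V$ achieve this, so that the trivial summand really is the $\SU(2)$-equivariant locus. In the invertible situation above the first half is immediate from $f_t=T_t^{-1}\circ f_0$; the second uses fullness of $f_0$ (so that the subspaces $f_0([g])$ span $W$) together with Schur's lemma for the irreducible $\SU(2)$-module $W$. The remaining ingredients --- non-vanishing of the trivial summand, convexity of the segment, and the immersion/minimality consequences of constant energy density --- are routine and mirror the holomorphic isometric embedding case.
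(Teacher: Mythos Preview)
Your argument is correct and follows essentially the same route as the paper. The paper states the corollary without explicit proof, relying on the preceding moduli-space description: for even $|k|\geq 2$ the summand $S^0\mathbf C^2$ (at $r=\tfrac{|k|}{2}+l$) sits inside $\mathcal M_{k,l}$, and since these are precisely the $\SU(2)$--invariant symmetric endomorphisms of $W$ orthogonal to $\mathbf R\,\mathrm{id}$, a ray through the origin in this summand produces $\SU(2)$--equivariant deformations of the standard map; image--inequivalence then comes from Theorem~\ref{imod}, exactly as you argue via the linear $S^1$--action. Your restriction to $l=0$ is harmless for the existence claim.

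The only point of departure is the endpoint. You let $f_1$ be an arbitrary interior point of $\mathcal M_{k,0}\cap S^0\mathbf C^2$, which is perfectly adequate for the corollary as stated. The paper, however, has a specific family in mind: the Remark immediately following the corollary identifies $f_1$ with the \emph{real standard map} of Proposition~\ref{pre_rigidity}, which lies on the boundary $\partial\overline{\mathcal M_{k,l}}$ (its target is the totally geodesic $\mathrm{Gr}_{2(|k|+l)-1}(\mathbf R^{2(|k|+l)+1})$, a quadric of odd codimension). So the paper's ray is the full radial segment in $S^0\mathbf C^2$ from the origin out to the boundary of $\overline{\mathcal M_{k,l}}$. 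This does not affect correctness, but it explains why the corollary is phrased with a closed interval and why the subsequent Remark singles out $f_1$.
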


\begin{rem}
In this setting $f_1$ would coincide with the {\it real standard map} to be  introduced in the proof of Proposition \ref{pre_rigidity}.
\end{rem}

\section{A one-to-one correspondence between moduli spaces}
From Theorem \ref{gmod1}, the dimension of the moduli space $\mathcal{M}_{k,l}$ of full ($k$,EH($l$)) harmonic maps
of constant K\"ahler angle is independent of the EH constant $l.$
In the present section, we will obtain a one-to-one correspondence between the moduli spaces $\mathcal{M}_{k,l}$ and $\mathcal{M}_{k,l-1}$
for each $k,l.$

For this purpose, we modify the contraction operator $\mathcal C$ (cf, \S 3) as follows:
by Schur's lemma, we can choose an appropriate constant $c_{2n-2r}$ for each irreducible component
$S^{2n-2r} \mathbf C^2\subset \left(S^n \mathbf C^2 \otimes S^n\mathbf C^2 \right) 
,\;1\leq r\leq n$
such that  the modified contraction operator
\begin{equation}\label{modified}\tilde{\mathcal{C}}=\bigoplus_{r=1}^{r=n} c_{2n-2r}\mathcal{C}|_{S^{2n-2r}\mathbf C^2}\end{equation}
preserves the Hermitian inner
product.
The adjoint operator $\tilde{\mathcal C}^{\ast}$ of $\tilde{\mathcal C}$ 
coincides with the `inverse' 
$\tilde{\mathcal{C}}^{-1}$
since $\tilde{\mathcal C}^{\ast}\tilde{\mathcal C}$ is a positive Hermitian operator preserving
the Hermitian product.  
 
Next, we introduce the operator norm:
\[
|D|_{op}
=max\left\{|\lambda| \, 
:
\, \lambda\,  \text{ is 
  an eigenvalue of }\,D \right\}.
\]
If $f$ is a full ($k$,EH($l$)) harmonic map 
then by Theorem \ref{GenDW}
\[
f([g])=T^{-1}\ker\,ev, 
\]
where 
the positive symmetric automorphism 
$T=(\mathrm{id} +D)^{\frac{1}{2}}\in \text{Aut}\,
\mathbf R^{2(|k|+2l+1)}
$ 
satisfies 
\begin{equation}\label{DWcond}
(D, \G\mathrm{S}(\mathfrak{m}V_0,V_0))=0.
\end{equation}
Note that $|D|_{op}<1$ due to the positivity of $T$. 

Then $\tilde{\mathcal{C}}^2(D)$ can be regarded as a symmetric endomorphism on 
$\mathbf R^{2(|k|+2(l-1)+1)}$. 
Hence, 
\[
\mathrm{id}+\frac{|D|_{op}}{|\tilde{\mathcal C}^2(D)|_{op}}\tilde{\mathcal C}^2(D)
\]
is also a positive symmetric automorphism on $\mathbf R^{2(|k|+2(l-1)+1)}$. 
Moreover, since $\tilde{\mathcal C}$ is equivariant, $\tilde{\mathcal C}^2(D)$ also satisfies \eqref{DWcond}. 
Consequently, Theorem \ref{GenDW} implies that  
\[
\left(\mathrm{id}+\frac{|D|_{op}}{|\tilde{\mathcal C}^2(D)|_{op}}\tilde{\mathcal C}^2(D)\right)^{-\frac{1}{2}}\ker\,ev, 
\]
is also a full ($k$,EH($l-1$)) harmonic map. 
The inverse construction  is
straightforward: we may correspond 
\[
\mathrm{id}+\frac{|D|_{op}}{|\tilde{\mathcal C}^{{\ast}2}(D)|_{op}}\tilde{\mathcal C}^{{\ast}2}(D)
\]
to $\mathrm{id} + D$ to obtain a one-to-one correspondence from $\mathcal M_{k,l}$ to $\mathcal M_{k,l+1}$. Thus we have established the following

\begin{thm}\label{flight.theorem} There is a one-to-one correspondence 
  between $\mathcal M_{k,l}$ and $\mathcal M_{k,l-1}$ which associates the gauge--equivalence class of full ($k$,EH($l$)) harmonic maps
  determined by $T=(\mathrm{id} + D)^{\frac12}$ to the gauge--equivalence class of full ($k$,EH($l-1$)) harmonic maps
  determined by
  \[\left(\mathrm{id}+\frac{|D|_{op}}{|\tilde{\mathcal C}^2(D)|_{op}}\tilde{\mathcal C}^2(D)\right)^{\frac12}\]
where $\tilde{\mathcal C}$ denotes the modified contraction operator (\ref{modified}).
\end{thm}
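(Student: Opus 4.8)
The plan is to verify that the correspondence described in the preceding paragraphs is well-defined on gauge--equivalence classes and is bijective, by unpacking the do Carmo--Wallach parametrisation of Theorem~\ref{GenDW}. First I would recall that, by Theorem~\ref{gmod1}, a full $(k,\mathrm{EH}(l))$ harmonic map of constant K\"ahler angle corresponds (with $n=2(|k|+2l)$) to a point $D$ lying in $\bigoplus_{r=l+1}^{2r\leqq|k|+2l}S^{2(|k|+2l-2r)}\mathbf C^2$, with $T=(\mathrm{id}+D)^{1/2}$ positive symmetric and satisfying the do Carmo--Wallach relation \eqref{DWcond}, and conversely; the gauge--equivalence class is determined by $\iota^{\ast}T\iota$, equivalently by $D$ itself here since $\ker T=0$ by maximality of $n$. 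The same description for $\mathcal M_{k,l-1}$ puts its points in $\bigoplus_{r=l}^{2r\leqq|k|+2(l-1)}S^{2(|k|+2(l-1)-2r)}\mathbf C^2$. Reindexing, both target spaces are $\bigoplus_{j\geq 1,\ \deg\leq |k|+2l-2}S^{2(|k|+2l-2-2j)}\mathbf C^2$ shifted appropriately; the key algebraic point is that $\tilde{\mathcal C}^2$ maps the ambient space $S^{|k|+2l}\mathbf C^2\otimes S^{|k|+2l}\mathbf C^2$ into $S^{|k|+2(l-1)}\mathbf C^2\otimes S^{|k|+2(l-1)}\mathbf C^2$, killing exactly the two top summands $S^{2(|k|+2l)}$ and $S^{2(|k|+2l)-2}$ (by \eqref{Cker} applied twice, and by equivariance), and restricting to a linear isomorphism of the remaining summands onto the constraint space for $\mathcal M_{k,l-1}$, with each factor rescaled by the nonzero constant coming from \eqref{modified}.

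Second, I would check that $\tilde{\mathcal C}^2(D)$ indeed gives a legitimate point of $\mathcal M_{k,l-1}$: it is symmetric (being a real combination of the symmetric $\mathcal C^{2r}$-images, since $\tilde{\mathcal C}$ is built to commute with the real structure and preserve the Hermitian form), it satisfies \eqref{DWcond} for the $(k,l-1)$ data because $\tilde{\mathcal C}$ is $\mathrm{SU}(2)$-equivariant and hence carries the orthogonal complement of $\G\mathrm S(\mathfrak m V_0,V_0)\oplus\mathbf R\,\mathrm{id}$ (computed in Corollary~\ref{intersection 2}) for level $l$ onto the corresponding complement for level $l-1$, and the rescaling $\frac{|D|_{op}}{|\tilde{\mathcal C}^2(D)|_{op}}$ is chosen precisely so that $\mathrm{id}+\frac{|D|_{op}}{|\tilde{\mathcal C}^2(D)|_{op}}\tilde{\mathcal C}^2(D)$ is a \emph{positive} symmetric automorphism — this is where $|D|_{op}<1$ is used, since it forces the perturbation to have operator norm strictly less than $1$. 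Here one must note the degenerate case $\tilde{\mathcal C}^2(D)=0$, i.e.\ $D$ supported on the two top summands; this happens only at $D=0$ among valid points when $|k|+2l-2<2(l+1)$, and in general the normalised image is defined by continuity/convention to be $0$, corresponding to the standard map going to the standard map. I would state this carefully so the map is everywhere defined.

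Third, to get bijectivity I would exhibit the inverse explicitly, as already indicated in the text: send $\mathrm{id}+D'$ in $\mathcal M_{k,l-1}$ to $\mathrm{id}+\frac{|D'|_{op}}{|\tilde{\mathcal C}^{\ast 2}(D')|_{op}}\tilde{\mathcal C}^{\ast 2}(D')$ in $\mathcal M_{k,l}$, using that $\tilde{\mathcal C}^{\ast}=\tilde{\mathcal C}^{-1}$ (established above from $\tilde{\mathcal C}^{\ast}\tilde{\mathcal C}$ being positive Hermitian and norm-preserving, hence the identity). Since $\tilde{\mathcal C}^{\ast 2}\tilde{\mathcal C}^2=\mathrm{id}$ on the summands $S^{2m}\mathbf C^2$ with $m\leq |k|+2l-2$ and both normalisations only rescale by a positive scalar, the composite in either order multiplies $D$ by a positive constant chosen so that the operator norm is restored to $|D|_{op}$; hence the composite is the identity. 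Finally I would observe that the correspondence respects gauge equivalence trivially — classes in $\mathcal M_{k,l}$ are already identified with the parameters $D$ — so the bijection descends to moduli spaces.

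The main obstacle I anticipate is the bookkeeping around the normalisation constant and the possible vanishing of $\tilde{\mathcal C}^2(D)$: one has to be sure that $|\tilde{\mathcal C}^2(D)|_{op}\neq 0$ whenever $D\neq 0$ lies in the actual constraint space $\bigoplus_{r=l+1}^{2r\leqq|k|+2l}S^{2(|k|+2l-2r)}\mathbf C^2$ (not just the full tensor square), which holds because $\tilde{\mathcal C}^2$ is injective on that subspace — it only annihilates the two summands of degree $>|k|+2l-2-2$, which are precisely the ones absent from the constraint space — and that the rescaling therefore produces a genuine element of the open convex body, not a boundary point, so that fullness and maximality of $n$ are preserved. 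Verifying that $\tilde{\mathcal C}^2$ restricted to the constraint space is an isomorphism onto the $(l-1)$-constraint space (dimensions match by the $l$-independence in Theorem~\ref{gmod1}, and injectivity follows from \eqref{Cker}) is the technical heart; once that is in place the rest is formal.
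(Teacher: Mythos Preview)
Your proposal is correct and follows essentially the same route as the paper: the argument there is the short passage immediately preceding the theorem statement, which parametrises maps by $D$ via Theorem~\ref{GenDW}, pushes $D$ through the equivariant operator $\tilde{\mathcal C}^2$, rescales by $|D|_{op}/|\tilde{\mathcal C}^2(D)|_{op}$ to restore positivity, and inverts with $\tilde{\mathcal C}^{\ast 2}$. Your write-up is more careful than the paper's on two points the authors leave implicit --- that $\tilde{\mathcal C}^2$ is injective on the constraint space of Corollary~\ref{intersection 2} (so the normalisation is well-defined for $D\neq 0$), and that the two normalised maps really compose to the identity --- and your handling of these is sound.
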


\section{Rigidity of the real standard map}                                               

An $\SU(2)$--irreducible representation is a \emph{class--one representation of} the pair $(\SU(2),\mathrm{U}(1))$,
if it contains non--zero $\mathrm{U}(1)$--invariant elements.

\begin{prop}\label{pre_rigidity} $\qquad$
\begin{enumerate}
\item  Let $S^{|k|+2l}\mathbf C^2$ be the $l$-th eigenspace of the vector bundle 
$\mathcal O(k)\to \mathbf C\mathrm P^1$ 
and $V_0$ the $\mathrm{U}(1)$--representation regarded 
as its
standard fibre. 
Then, $\G\mathrm{H}(V_0,V_0)=\mathrm{H}(S^{|k|+2l}\mathbf C^2).$
\item
Let $S^{2|k|+2l}\C^2$ be the indicated representation space of $\mathrm{SU}(2)$, 
of which an invariant real subspace is $S^{|k|+l}_0\R^3\cong \R^{2(|k|+l)+1}$ 
and $V_0$ the $\mathrm{U}(1)$--representation regarded 
as the standard fibre for  $\mathcal{O}(2k)\to \C \mathrm P^1.$ 
Then, 
$\G\mathrm{S}(V_0,V_0)=\mathrm{S}(S^{|k|+l}_0\R^3)$. 
\end{enumerate}
\end{prop}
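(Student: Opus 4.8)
The plan is to establish both parts by the same two-step pattern: first identify the $\SU(2)$-equivariant endomorphisms $\G\mathrm{H}(V_0,V_0)$ (resp. $\G\mathrm{S}(V_0,V_0)$) as those Hermitian (resp. symmetric) endomorphisms of the big representation which, after the $\mathrm{U}(1)$-decomposition, preserve the line $V_0$; then count which irreducible $\SU(2)$-summands of the endomorphism space can carry a nonzero $\mathrm{U}(1)$-equivariant map into $V_0$, and show this count already forces equality with the \emph{whole} space of Hermitian (resp. symmetric) endomorphisms of the invariant real subspace. Throughout I would use Frobenius reciprocity in the form already invoked in Theorem \ref{GenDW}: $\G\mathrm{H}(V_0,V_0)$ is identified with $\mathrm{Hom}_{\mathrm{U}(1)}(V_0, \mathrm{H}(W))$ via restriction to the fibre, and similarly with $\mathrm{S}$ in place of $\mathrm{H}$.

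For part (1): write $W=S^{|k|+2l}\mathbf C^2$ with its weight decomposition $W=\bigoplus_{j}\mathbf C_{|k|+2l-2j}$, and $V_0=\mathbf C_{-k}$. An element of $\G\mathrm{H}(V_0,V_0)$ corresponds to a $\mathrm{U}(1)$-map $V_0\to \mathrm{H}(W)$, i.e. to a weight-zero element of $\mathrm{H}(W)$ whose image meets $V_0$; but $\mathrm{H}(W)$ decomposes under $\SU(2)$ as $\bigoplus_{r=0}^{|k|+2l}S^{2r}\mathbf C^2$ (cf. the Clebsch--Gordan computation used in Proposition \ref{intersection}), and every such summand $S^{2r}\mathbf C^2$ is a class-one representation of $(\SU(2),\mathrm{U}(1))$, contributing exactly one weight-zero line. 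Counting dimensions, $\dim \mathrm{H}(W)$ equals the number of these weight-zero lines times... more precisely one sees that the restriction map $\mathrm{H}(W)\to \{$Hermitian endomorphisms of $W$ preserving $V_0$, i.e. of the form needed$\}$ is surjective onto all of $\mathrm{H}(W)$ because $V_0$ is a single weight line and every $\SU(2)$-irreducible in $\mathrm{H}(W)$ already contributes its (unique) $\mathrm{U}(1)$-invariant; hence $\G\mathrm{H}(V_0,V_0)=\mathrm{H}(W)=\mathrm{H}(S^{|k|+2l}\mathbf C^2)$. The clean way to phrase this is: since $V_0$ is one-dimensional over $\mathbf C$, $\mathrm{H}(V_0)$ is one-dimensional, so $\dim_{\mathbf R}\G\mathrm{H}(V_0,V_0)=\dim_{\mathbf R}\mathrm{Hom}_{\mathrm{U}(1)}(V_0,\mathrm{H}(W))=$ multiplicity of $\mathbf C_0$ in $\mathrm{H}(W)=$ number of $\SU(2)$-irreducible summands of $\mathrm{H}(W)=\dim\mathrm{H}(S^{|k|+2l}\mathbf C^2)$ as a real vector space of $\SU(2)$-invariant ... and the inclusion $\G\mathrm{H}(V_0,V_0)\supseteq \mathrm{H}(S^{|k|+2l}\mathbf C^2)$ (the $\SU(2)$-invariant Hermitian forms certainly preserve $V_0$) together with the dimension match gives equality.

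For part (2) the same machine runs with $W'=S^{2|k|+2l}\mathbf C^2$, the distinguished real invariant subspace $S^{|k|+l}_0\R^3\cong\R^{2(|k|+l)+1}$ (the harmonic polynomials of degree $|k|+l$ on $\R^3$, sitting inside $W'$ via the standard real form), and $V_0=\mathbf C_{-2k}$ viewed as the fibre of $\mathcal O(2k)$. Here I must be careful that we work with \emph{symmetric} (real-bilinear) endomorphisms and with the real representation, so the relevant count is the multiplicity of the trivial $\SU(2)$-summand in $\mathrm{S}(W')$, restricted to summands that actually reach $V_0$; the relation $V_0\subset S^{|k|+l}_0\R^3$ (weight $-2k$ is among the weights $\{-( |k|+l),\dots,|k|+l\}$ scaled appropriately, i.e. $|2k|\le |k|+l$ is forced by $l\ge |k|$... one should check this is automatic in the regime of interest, or restrict the statement accordingly) lets one identify $\G\mathrm{S}(V_0,V_0)$ with $\mathrm{S}(S^{|k|+l}_0\R^3)$. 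Again one gets the containment $\mathrm{S}(S^{|k|+l}_0\R^3)\subseteq \G\mathrm{S}(V_0,V_0)$ for free and closes by a dimension count via Frobenius reciprocity.

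\textbf{Main obstacle.} The delicate point is part (2): one must correctly match the \emph{real} structure — which $\SU(2)$-summands of $S^2(W')$ versus $\Lambda^2(W')$ survive after imposing the real form, and whether the invariant subspace relevant to the real standard map is $S^{|k|+l}_0\R^3$ rather than some larger piece of $W'$ — and verify that the standard fibre $V_0=\mathbf C_{-2k}$ really sits inside $S^{|k|+l}_0\R^3$ with the right weight, so that the Frobenius count is over the correct representation. I expect the book-keeping of weights (matching the degree-$2k$ line bundle to weight $-2k$ inside the spin-$(|k|+l)$ representation of $\SO(3)$) and the real-vs-complex distinction in passing from $\mathrm{H}$ to $\mathrm{S}$ to be where the argument needs the most care; the $\SU(2)$-representation-theoretic counting itself is routine Clebsch--Gordan once the setup is pinned down.
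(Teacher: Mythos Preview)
Your proposal has a genuine gap: the identification ``$\G\mathrm{H}(V_0,V_0)\cong\mathrm{Hom}_{\mathrm{U}(1)}(V_0,\mathrm{H}(W))$ via Frobenius reciprocity'' is not correct, and without it the dimension count does not close. By definition $\G\mathrm{H}(V_0,V_0)$ is the $\SU(2)$-submodule of $\mathrm{H}(W)$ generated by the one-dimensional real line $\mathrm{H}(V_0,V_0)=\mathbf R\,P_{V_0}$; equivalently it is the direct sum of those irreducible summands $S^{2r}\mathbf C^2\subset\mathrm{H}(W)$ onto which $P_{V_0}$ projects nontrivially. Frobenius reciprocity tells you only that each $S^{2r}\mathbf C^2$ has a one-dimensional weight-zero line, \emph{not} that $P_{V_0}$ has nonzero component along each of these lines. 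The latter is exactly the content of the proposition and cannot be read off from a multiplicity count: the weight-zero subspace of $\mathrm{H}(W)$ has dimension $|k|+2l+1$, while $\mathrm{H}(V_0,V_0)$ is a single line inside it, and nothing formal prevents that line from lying in a proper sum of the $(S^{2r}\mathbf C^2)^{\mathrm U(1)}$. Your claimed containment $\G\mathrm{H}(V_0,V_0)\supseteq\mathrm{H}(W)$ is the full statement to be proved, and the justification offered (``the $\SU(2)$-invariant Hermitian forms certainly preserve $V_0$'') yields only $\mathbf R\cdot\mathrm{id}_W$, not all of $\mathrm{H}(W)$.

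The paper supplies precisely this missing step by a different route. It argues by contradiction: take a class-one summand $S^{2r}\mathbf C^2$ in $\G\mathrm{H}(V_0,V_0)^\perp$ and a nonzero $\mathrm{U}(1)$-invariant $C$ in it; orthogonality to the $G$-orbit of $\mathrm{H}(V_0,V_0)$ polarises to $(Cgv,gv)=0$ for all $g\in\SU(2)$, $v\in V_0$. Because the weight decomposition of $W$ is multiplicity-free (this is the ``normal decomposition''), the $\mathrm{U}(1)$-equivariant Hermitian $C$ is diagonal in the weight basis, and then setting $T=(\mathrm{id}+C)^{1/2}$ and invoking the do Carmo--Wallach argument from \cite{MaNaTa}, Theorem~5.4, forces $T=\mathrm{id}$, hence $C=0$. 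Part~(2) is handled by transporting the same mechanism to the real form $S^{|k|+l}_0\mathbf R^3$; the weight-space analysis you began is the correct setup, but again the substantive step---that no irreducible summand of $\mathrm{S}(S^{|k|+l}_0\mathbf R^3)$ can be orthogonal to $\mathrm{S}(V_0,V_0)$---requires the reductio, not a count. (Incidentally, your worry about whether $|2k|\le |k|+l$ is misplaced: the weight $-2k$ lies in $S^{2|k|+2l}\mathbf C^2$ for every $l\ge 0$.)
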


\begin{proof}
  \noindent
  $(1)$ The proof is by {\it reductio ad absurdum} and follows, with minor modifications, the same lines as Theorem 5.4 in \cite{MaNaTa}. We sketch the argument. The 
  $\SU(2)$-module $W=S^{|k|+2l}\mathbf C^2$ decomposes under $\mathrm{U}(1)$ as 
\[S^{|k|+2l}\mathbf C^2=\C_{-|k|-2l}\oplus\C_{-|k|-2l+2}\oplus\dots\oplus\C_{|k|+2l}.\] 
Then, $V_0=\C_{-k}$ by Frobenius reciprocity and the invariance of the Hermitian inner product,
and the decomposition is {\it normal} (cf \cite{DoC-Wal}, or \cite{MaNaTa} \S 6).
Consider a class--one representation of $(\SU(2),\mathrm{U}(1))$ in the orthogonal complement to $\mathrm{GH}(V_0,V_0)$ in $\mathrm{H}(W),$
and let $C$ be a non-zero $\mathrm{U}(1)$-invariant element in it.
Polarisation of the orthogonality condition $(C,gH(v_1,v_2))_{\mathrm{H}(W)}=0$ %
leads to $(Cgv_1,gv_2)=0$ for all $g\in \mathrm{SU}(2),\, v_1,v_2\in V_0.$
A positive Hermitian operator $T$ is then defined by $T^2=\mathrm{id}+C,$ for $C$ small enough. Being
$\mathrm{U}(1)$-equivariant, Schur's lemma implies that $T=\mathrm{id}$ and so $C=0$ against the hypothesis.
Therefore, 
every class--one subrepresentation of $(\SU(2),\mathrm{U}(1))$ 
in $\mathrm{H}(W)$ is 
in $\mathrm{GH}(V_0,V_0).$

$(2)$ 
Regarding $\mathbf{C}P^1$ as $\SU(2)/\mathrm{U}(1),$ the space of sections $\Gamma(\mathcal O(2k))$ becomes an $\SU(2)$--module
and the $l$-th eigenspace 
is identified with $S^{2|k|+2l}\C^2.$ 
This decomposes under $\mathrm{U}(1)$ as
\begin{equation}\label{su2_to_u1} S^{2|k|+2l}\C^2 = \bigoplus_{r=0}^{2|k|+2l} \C_{2|k|+2l-2r} 
\end{equation}
By the same arguments above, the typical fibre of $\mathcal O(2k)\to \mathbf C\mathrm P^1$ is identified with 
$\mathbf C_{-2|k|}$ in (\ref{su2_to_u1}). 
Although $S^{2|k|+2l}\C^2$ has a real invariant subspace $S^{|k|+l}_0\R^3$ of dimension $2|k|+2l+1,$
the irreducible components in the right-hand side of (\ref{su2_to_u1}) are not invariant under the real structure $\sigma,$
but $\sigma(\C_{2|k|+2l-2r})=\C_{-2|k|-2l+2r}.$
Therefore $\sigma$ leaves $(\C_{2|k|+2l-2r}\oplus \C_{-2|k|-2l+2r})$ stable for each $r=0,\dots,|k|+l+1,$
which splits in two isomorphic real irreducible $\text{U}(1)$--modules,
denoted $(\C_{2|k|+2l-2r}\oplus \C_{-2|k|-2l+2r})^\R.$ 
If $r=|k|+l$, then $(\mathbf C)^{\mathbf R}=\mathbf R$,  the trivial real representation. 
Thus 
\begin{equation}\label{real}
  S_0^{|k|+l}\R^3
= \bigoplus_{r=0, r\not=|k|+l}^{2|k|+2l} (\C_{2|k|+2l-2r}\oplus\C_{-2|k|-2l+2r})^\R\oplus \mathbf R.
\end{equation}
The space $S^{|k|+l}\R^3$ globally generates $\mathcal O(2k)\to \mathbf C\mathrm P^1,$ and so determines 
a \emph{real standard map} $f_0:\C \mathrm P^1 \to \mathrm{Gr}_{2k-1}(\R^{2k+1})$  which turns out to be an
($2k$,EH($l$)) isometric minimal immersion
by Lemma 2.3 in \cite{MaNaTa}. 
It is then possible to define the
adjoint of the evaluation map,
\[ev^*_{[e]}:\mathcal{O}(2k)\to \underline{S^{|k|+l}\R^3},\]
such that 
its image at the reference point of $\C \mathrm P^1,$
is $(\C_{2|k|}\oplus\C_{-2|k|})^\R.$ 

Now, \eqref{real} gives the normal decomposition of $S^{|k|+l}_0\R^3$ 
where  $V_0=(\C_{-2k}\oplus\C_{2k})^\R.$ 
The space of symmetric endomorphisms of $S^{|k|+l}_0\R^3$ can be identified
using representation theory as in \cite{MaNaTa} \S 4 to give
\[
\mathrm{S}(S^{|k|+l}_0\R^3)\subset \mathrm{End}(S^{|k|+l}_0\R^3)
=S^{|k|+l}_0\R^3\otimes_\R (S^{|k|+l}_0\R^3)^*\cong \otimes^2 S^{|k|+l}_0\R^3,
\]
\[
\mathrm{S}(S^{|k|+l}_0\R^3)=\bigoplus_{r=0}^{|k|+l} S^{4|k|+4l-4r}_0\R^3 \subset \otimes^2 S^{|k|+l}_0\R^3 
= \bigoplus_{r=0}^{2|k|+2l} S^{4|k|+4l-2r}_0\R^3.\]

\end{proof}

The real standard map induced by 
$\mathcal{O}(2k)\to \C \mathrm P^1$
and 
$S^{|k|+l}_0\R^3$ has just been depicted in the above proof. 
Since its
deformation space is,  
up to gauge equivalence, 
$\mathrm{GS}(V_0,V_0)^\perp\subset S(S^{|k|+l}_0\R^3) $ we
obtain the following

\begin{cor}\label{rigidity}
Let $S^{|k|+l}_0\R^3\cong\R^{2(|k|+l)+1}$ be the real invariant subspace of the $\SU(2)$-module
  $S^{2|k|+2l}\C^2.$
If $f:\mathbf C\mathrm P^1 \to \mathrm{Gr}_{2(|k|+l)-1}(\mathbf R^{2(|k|+l)+1})$ is a ($2k$,EH($l$)) isometric minimal 
immersion, 
then it is the standard map induced by $S^{|k|+l}_0\R^3$ up to gauge equivalence.
\end{cor}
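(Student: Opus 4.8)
The plan is to read the statement off from the generalised do Carmo--Wallach theorem (Theorem~\ref{GenDW}) together with Proposition~\ref{pre_rigidity}(2), which between them force the relevant moduli space to collapse to a single point. By the conventions fixed in \S3, a \emph{($2k$,EH($l$)) isometric minimal immersion} is an EH harmonic map of degree $2k$, EH constant $l$, and constant K\"ahler angle; hence its pull-back bundle is $\mathcal O(2k)\to\C\mathrm P^1$, it satisfies the gauge condition \eqref{gauge.condition}, and Theorem~\ref{GenDW} applies. Thus $f$ is determined, up to gauge equivalence, by a semi-positive symmetric endomorphism $T=(\mathrm{id}+D)^{\frac12}$ acting on an eigenspace of $\mathcal O(2k)$, with $D$ subject to the do Carmo--Wallach conditions and $|D|_{op}\le 1$; here $V_0=(\C_{-2k}\oplus\C_{2k})^\R$ is the standard fibre of $\mathcal O(2k)$ exhibited in the proof of Proposition~\ref{pre_rigidity}.

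The decisive step is to match $f$ against the real standard map $f_0$ induced by $S^{|k|+l}_0\R^3$. The point is dimensional: the target $\R^{2(|k|+l)+1}$ has exactly the dimension of $S^{|k|+l}_0\R^3$, so in the expression \eqref{DW3} of Theorem~\ref{GenDW} the operator $T$ is invertible, the globally generating space $(\ker T)^\perp$ is identified with $S^{|k|+l}_0\R^3$, and $f$ lies in the deformation family of $f_0$. As recorded in the paragraph immediately preceding this corollary, that family is parametrised, up to gauge equivalence, by the slice $\mathrm{GS}(V_0,V_0)^\perp\subset\mathrm{S}(S^{|k|+l}_0\R^3)$ cut out by the do Carmo--Wallach conditions. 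But Proposition~\ref{pre_rigidity}(2) asserts $\mathrm{GS}(V_0,V_0)=\mathrm{S}(S^{|k|+l}_0\R^3)$: the span of the operators $S(gv_1,gv_2)$ with $g\in\SU(2)$ and $v_i\in V_0$ already exhausts every symmetric endomorphism of $S^{|k|+l}_0\R^3$. Hence $\mathrm{GS}(V_0,V_0)^\perp=\{0\}$, so $D=0$ and $T=\mathrm{id}$; by the uniqueness clause of Theorem~\ref{GenDW}, $f$ is then gauge equivalent to the standard map induced by $S^{|k|+l}_0\R^3$, that is, to $f_0$.

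The only step that genuinely requires care --- and hence the main obstacle --- is the reduction in the second paragraph: one must verify that an arbitrary ($2k$,EH($l$)) isometric minimal immersion into the \emph{minimal-dimensional} target $\mathrm{Gr}_{2(|k|+l)-1}(\R^{2(|k|+l)+1})$ is captured by do Carmo--Wallach theory with $S^{|k|+l}_0\R^3$ itself as ambient space and with $T$ invertible, rather than appearing as a degenerate boundary point of the larger moduli space $\mathcal M_{2k,l}$. Concretely, this amounts to identifying the globally generating space $(\ker T)^\perp$ of $l$-th eigensections of $\mathcal O(2k)$ with $S^{|k|+l}_0\R^3$ up to the $\SU(2)$- and gauge-equivalences in force, using that $S^{|k|+l}_0\R^3$ is the smallest globally generating space of $l$-th eigensections of $\mathcal O(2k)$; the same minimality rules out, a priori, that $f$ be non-full, since a non-full such $f$ would factor through a strictly smaller totally geodesic sub-Grassmannian, hence through a strictly smaller globally generating space of $l$-th eigensections, a contradiction. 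Once this identification is secured no further computation is needed: the vanishing $\mathrm{GS}(V_0,V_0)^\perp=\{0\}$ of Proposition~\ref{pre_rigidity}(2) delivers the rigidity at once.
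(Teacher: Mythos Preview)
Your proposal is correct and follows essentially the same route as the paper: the corollary is stated immediately after the sentence ``Since its deformation space is, up to gauge equivalence, $\mathrm{GS}(V_0,V_0)^\perp\subset \mathrm{S}(S^{|k|+l}_0\R^3)$ we obtain the following'', so the paper's argument is exactly the one you give---apply do Carmo--Wallach theory with ambient space $S^{|k|+l}_0\R^3$, then invoke Proposition~\ref{pre_rigidity}(2) to see that $\mathrm{GS}(V_0,V_0)^\perp=\{0\}$, forcing $T=\mathrm{id}$.

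You are in fact more explicit than the paper on one point: the identification of the globally generating space $(\ker T)^\perp\subset S^{2|k|+2l}\C^2$ with $S^{|k|+l}_0\R^3$ (your third paragraph). The paper leaves this step implicit, simply passing from the full complex eigenspace of Theorem~\ref{GenDW} to the real subspace $S^{|k|+l}_0\R^3$ without comment. Your discussion of why the odd target dimension $2(|k|+l)+1$ forces this identification---and why one is not merely looking at a boundary point of the larger moduli $\mathcal M_{2k,l}$---is a genuine clarification. One small caveat: the phrase ``smallest globally generating space of $l$-th eigensections'' is not quite the right invariant to invoke; what is really being used is that $S^{|k|+l}_0\R^3$ is the unique $\SU(2)$-invariant real form of $S^{2|k|+2l}\C^2$, together with the fact that the target dimension matches it exactly.
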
 

\section{Rigidity of totally real EH harmonic maps}
We start our discussion by recalling the relation between certain special class
of harmonic maps, and totally real immersions: Let $f:\mathbf C\mathrm P^1 \to \mathrm{Gr}_n(\mathbf R^{n+2})$
be a harmonic map of degree $0$ with constant energy density 
satisfying the gauge condition (\ref{gauge.condition}). 
Since the canonical connection on the trivial complex line bundle is flat, 
we see from the gauge condition that
\[
f^{\ast}\omega_n=0. 
\]
Since $f$ is conformal, we conclude that it is a totally real isometric immersion up to a constant multiple. 
Conversely, if $f:\mathbf C\mathrm P^1 \to \mathrm{Gr}_n(\mathbf R^{n+2})$ 
is an isometric minimal totally real immersion, 
then $f$ satisfies the gauge condition above.
Thus $f$ is a harmonic map of degree $0,$ 
with constant energy density, 
satisfying (\ref{gauge.condition}). 
 
We will show the rigidity of totally real minimal immersion with the 
EH condition in this section. 

\begin{thm}
  Let $f:\mathbf C\mathrm P^1\to \mathrm{Gr}_n(\mathbf R^{n+2})$ be a totally real, full,  ($0$,EH($l$)) 
  minimal immersion. 
    Then $n=4l$ and $f$ is image-equivalent to the standard map.
\end{thm}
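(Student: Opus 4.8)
The plan is to run the general do Carmo--Wallach machinery of Theorem \ref{GenDW} in the degree--zero case and show that the deformation space vanishes, forcing $f$ to be the standard map. First I would record that a totally real, full, $(0,\mathrm{EH}(l))$ minimal immersion $f:\mathbf C\mathrm P^1\to\mathrm{Gr}_n(\mathbf R^{n+2})$ satisfies the gauge condition \eqref{gauge.condition} with $k=0$ (this is exactly the equivalence established at the start of this section), so the pull-back bundle $V\to\mathbf C\mathrm P^1$ is the \emph{trivial} complex line bundle $\mathcal O(0)$ with its flat canonical connection. By the decomposition \eqref{declb1} with $k=0$, the $l$-th eigenspace is $S^{2l}\mathbf C^2$, with eigenvalue $2\pi\{2l(l+1)\}$; the EH constant of $f$ being $l$ means $\mathbf R^{n+2}\subset\Gamma(V)$ sits inside this eigenspace, so $n+2\le\dim_{\mathbf R}S^{2l}\mathbf C^2 = 2(2l+1)=4l+2$, i.e. $n\le 4l$, with $n=4l$ precisely when $\mathbf R^{n+2}$ is all of $S^{2l}\mathbf C^2$ (viewed as a real vector space). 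So the content is: (a) the maximal-$n$ case gives $n=4l$; (b) in that case the moduli space is a single point.

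For step (b) I would apply Theorem \ref{GenDW} with $W=S^{2l}\mathbf C^2$ (real form) and $V_0=\mathbf C_0$, the \emph{trivial} $\mathrm U(1)$-module, since $k=0$. The homogeneous bundle is $\mathrm{SU}(2)\times_{\mathrm U(1)}\mathbf C_0\to\mathbf C\mathrm P^1$, the trivial bundle. The deformation space, up to gauge equivalence, is the orthogonal complement of $\mathrm{GS}(\mathfrak m V_0,V_0)\oplus\mathbf R\,\mathrm{id}$ inside $\mathrm S(W)$, exactly as in \S 4 and \S 6 — this is where the analogue of Corollary \ref{intersection 2} is needed, now with $k=0$. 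The key representation-theoretic computation is to identify $\mathrm{GS}(\mathfrak m V_0,V_0)$: since $V_0=\mathbf C_0$ is $\mathrm U(1)$-trivial, $\mathfrak m\otimes V_0\cong\mathfrak m$, and projecting into $S^{2l}\mathbf C^2|_{\mathrm U(1)}$ gives $\mathfrak m V_0=\mathbf C_{-2}\oplus\mathbf C_2$ (its real form), provided $l\ge1$. Running the contraction-operator argument of Proposition \ref{intersection} mutatis mutandis — with $W=S^{2l}\mathbf C^2$, and the relevant weight vectors $u_{-2},u_0,u_2$ — I expect to find that $\mathrm{GS}(\mathfrak m V_0,V_0)$ is the sum of \emph{all} the irreducible summands $S^{2(2l)-4r}\mathbf C^2$ of $S^2(S^{2l}\mathbf C^2)$ with $r\ge 1$, i.e. every summand except the top one $S^{4l}\mathbf C^2$. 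Since the orthogonal complement of $\mathrm{GS}(\mathfrak m V_0,V_0)\oplus\mathbf R\,\mathrm{id}$ in $\mathrm S(W)$ then reduces to $\{0\}$ (the $S^{4l}$ component is already accounted for by... — wait, more carefully: the complement is $\bigoplus_{r\ge l+1}^{2r\le 2l}S^{4l-4r}\mathbf C^2$ by the $k=0$ specialization of Corollary \ref{intersection 2}, and the condition $r\ge l+1$ together with $2r\le 2l$, i.e. $r\le l$, is vacuous). Hence the deformation space is trivial, the only $T$ is (a scalar multiple of) the identity, and $f$ is gauge-equivalent — hence image-equivalent — to the standard map by $S^{2l}\mathbf C^2$.

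I would then add the remark that this standard map is precisely the one whose rigidity is the degree-zero instance of Corollary \ref{rigidity} (with $2k$ replaced by $0$, the real invariant subspace $S^{2l}_0\mathbf R^3\cong\mathbf R^{4l+1}$ of $S^{4l}\mathbf C^2$ — careful with normalizations of which $\mathrm{SU}(2)$-module carries the eigenspace; in the totally real degree-zero situation it is $S^{2l}\mathbf C^2$ itself that is the eigenspace, not a real form of a larger module), and note that the full statement records the two pieces together: first the dimension bound $n\le 4l$ from (I) of Theorem \ref{GenDW} and the eigenspace dimension, then $n=4l\Rightarrow$ rigidity from the vanishing of the deformation space.

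The main obstacle I anticipate is the bookkeeping in the contraction-operator computation establishing $\mathrm{GS}(\mathfrak m V_0,V_0)^\perp\cap(\mathbf R\,\mathrm{id})^\perp=\{0\}$: one must check that formula \eqref{cont}, applied to the image of the distinguished element $X$ (now built from $u_{-2},u_0$ rather than $u_{-k+2},u_{-k}$) vanishes under $\mathcal C^{2r}$ exactly when $r\ge 1$, so that \emph{every} nontrivial isotypic component of $S^2 W$ lies in $\mathrm{GS}(\mathfrak m V_0,V_0)$. A secondary point requiring care is the real-structure argument: because $V_0$ is $\mathrm U(1)$-trivial, the decomposition of $W$ under $\mathrm U(1)$ and the behaviour of $\sigma$ differ slightly from the $k>0$ case of \S 4, and one must confirm that the passage between $\mathrm{GH}$ and $\mathrm{GS}$ (using $\mathrm{GH}(V_0,V_0)=\mathrm H(S^{2l}\mathbf C^2)$, the degree-zero case of Proposition \ref{pre_rigidity}(1)) goes through, so that no extra deformation directions are hiding in the antisymmetric or $J$-twisted parts. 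Finally, I would double-check the small cases $l=0$ (where $f$ is a constant map and $n=0$, trivially consistent) and $l=1$ to make sure the claimed identity $n=4l$ and rigidity are not vacuous or off by the boundary conventions in the sums.
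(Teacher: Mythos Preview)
Your route is different from the paper's. The paper does not invoke the contraction operator or Corollary~\ref{intersection 2} at all: it writes down the weight decomposition of $S^{2l}\mathbf C^2$, identifies the standard map $f_0([g])=g\mathbf C_0^{\perp}$, records the normal decomposition $\mathrm{Im}\,B_p=\mathbf C_{2p}\oplus\mathbf C_{-2p}$, and then appeals directly to the Schur's-lemma argument of Theorem~5.4 in \cite{MaNaTa} (the same argument you see sketched in Proposition~\ref{pre_rigidity}(1)). Because the weight spaces in the normal decomposition are multiplicity-free, any $\mathrm U(1)$-equivariant positive symmetric $T$ satisfying the do~Carmo--Wallach orthogonality conditions must be scalar on each, and the conditions force the scalars to agree; hence $T=c\,\mathrm{id}$, so $\ker T=0$ and $n+2=\dim_{\mathbf R}S^{2l}\mathbf C^2=4l+2$ automatically. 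This is the \S 6 technique applied to $k=0$, not the \S 4 technique.

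Your alternative---specialising Corollary~\ref{intersection 2} to $k=0$ to get the vacuous range $l+1\le r\le l$---is conceptually attractive, but your supporting computation is wrong in two ways. First, Proposition~\ref{intersection} is stated for $k>0$, and the computation genuinely changes at $k=0$: running \eqref{cont} with $n=2l$, $p=l+1$, $q=l$ one finds the summation range $2r-l\le s\le l-1$ is nonempty iff $r\le l-1$, so $\mathcal C^{2r}(X)=0$ iff $r\ge l$, not $r\ge 1$. Second, the logical direction is reversed: nonvanishing of $\mathcal C^{2r}(X)$ for small $r$ is what places the high-degree summands $S^{4l-4r'}$ (for $r'\le l-1$) inside $\mathrm{GS}(\mathfrak m V_0,V_0)$, not vanishing. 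The upshot is that in the $k=0$ analysis the $S^0\mathbf C^2$ component of $\sigma\mathrm H_+\oplus J\sigma\mathrm H_+$ is \emph{not} hit by $\mathrm{GS}(\mathfrak m V_0,V_0)$ alone; you need the additional $\mathrm{GS}(V_0,V_0)$ condition (here $V_0=\mathbf C_0$, so $u_0\otimes u_0$ and its $\mathrm{SU}(2)$-orbit enter) to dispose of that last piece. The paper's normal-decomposition argument bypasses this bookkeeping entirely.
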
 

\begin{proof}
  Since $f$ is   EH harmonic,   it follows from  the 
  generalisation of the Theorem of Takahashi \cite{Na-13} that 
$\mathbf R^{n+2}$ is an eigenspace of the Laplacian acting on 
$\mathbf R^2$-valued functions.  
Thus, $\mathbf R^{n+2}$ is a subspace of $S^l\mathbf C^2$, 
where $l$ is an even number. 
To define the standard map $f_0$, we consider the weight decomposition of 
$S^l\mathbf C^2$:
\[
S^l\mathbf C^2=\mathbf C_l \oplus \mathbf C_{l-2} \oplus \cdots \oplus \mathbf C_0 \oplus \cdots 
\oplus \mathbf C_{-l}.  
\]
Then $f_0([g])=g\mathbf C_0^{\bot}$.  
The normal decomposition of the standard map is given  by 
\[
\text{\rm Im}\,B_p=\mathbf C_{2p} \oplus \mathbf C_{-2p}.
\]
Then the same argument as in the proof of Theorem 5.4 
in \cite{MaNaTa} gives the result. 
\end{proof}
Actually, the result would still hold at the level of gauge-equivalence
of maps.

The EH condition is indispensable for the previous rigidity argument,
as dropping it leads to the following counterexample  by
Wang and Jiao \cite{JWanJia}.

We use an invariant real subspace $\mathbf R^{2m+1}\subset S^{2m}\mathbf C^2$
to define a totally real isometric minimal 
immersion 
$f_1:\mathbf C\mathrm P^1 \to \mathrm{Gr}_{2m}(\mathbf R^{2m+2})$. 
The weight decomposition of $\mathbf R^{2m+1}$ is 
\[
\mathbf R^{2m+1}=\mathbf C_{2m} \oplus \mathbf C_{2m-2} \oplus \cdots \mathbf C_2 \oplus 
\mathbf R_0.
\]
Consider the orthogonal direct sum of $\mathbf R^{2m+1}$ and a trivial representation 
$\mathbf R$. 
The orthogonal complement of $\mathbf R_0 \oplus \mathbf R$ in $\mathbf R^{2m+1} \oplus \mathbf R$ is denoted by 
$(\mathbf R_0 \oplus \mathbf R)^{\bot}$. 
Then we define $f_1$ as 
\[
f_1([g]):=g(\mathbf R_0 \oplus \mathbf R)^{\bot}=g\mathbf R_0^{\bot}, 
\]
where $\mathbf R_0^{\bot}$ denotes the orthogonal complement of 
$\mathbf R_0$ in $\mathbf R^{2m+1}.$ 
The orientation of $\mathbf R_0 \oplus \mathbf R$ is determined by this ordering. 

Now, 
$f_1$ is of degree $0$ and the induced connection on the pull-back 
of the universal quotient bundle is a product connection. 
By definition of $f_1$, the mean curvature operator $A$ has $0$ as eigenvalue. 
Moreover,  $A$ is parallel 
because its eigenvalues 
are constant, and the eigenspace decomposition is invariant under the connection.  
The generalisation of Theorem of Takahashi yields that $f_1$ is 
harmonic. Then, by the above argument, $f_1$ 
is a totally real, isometric minimal 
immersion  with $\text{det}\,A=0$ and $\nabla A=0$. 

Notice that $f_0$ and $f_1$ are not
  image-equivalent 
  because their mean curvature
  operators are different. Indeed, the image-equivalence class of $f_1$ is
characterised by the following
\begin{thm}
Let $f:\mathbf C\mathrm P^1\to \mathrm{Gr}_n(\mathbf R^{n+2})$ be 
a totally real, full, minimal immersion with 
$\text{\rm det}\,A=0$ and $\nabla A=0$. 
Then, $n$ is an even integer and 
$f$ is image equivalent to $f_1$.
\end{thm}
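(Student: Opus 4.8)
The plan is to run the same do Carmo--Wallach machinery used for the EH case (Theorem \ref{GenDW}) but now tracking the mean curvature operator $A$ rather than insisting on the strong Einstein condition. First I would record the setup: since $f$ is totally real, full, minimal of degree $0$ with constant energy density, the discussion preceding this theorem shows $f$ satisfies the gauge condition (\ref{gauge.condition}), hence by the generalisation of the Theorem of Takahashi the space $\mathbf R^{n+2}\subset\Gamma(V)$ is contained in the eigenspace $S^{l}\mathbf C^2$ of the (flat) connection Laplacian acting on $\mathbf R^2$-valued functions on $\mathbf C\mathrm P^1$, for some even $l=2m$. The condition $\nabla A=0$ forces $A$ to have constant eigenvalues and a parallel eigenspace decomposition, while $\det A=0$ pins down one of those eigenvalues to be $0$; by Lemma \ref{Nonposed} the others are negative. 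Because the bundle $V\to\mathbf C\mathrm P^1$ is the trivial $\mathbf R^2$-bundle with product connection and $A$ is parallel, the only $\SU(2)$-invariant such operator with a zero eigenvalue is (up to the scaling giving the fixed energy density) the one realised by $f_1$; this is the key structural input, and it is what singles out $f_1$ among all $(0,\mathrm{EH}(l))$-type data once the Einstein condition is relaxed to $\nabla A=0$, $\det A=0$.

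The core of the argument then mirrors the proof of Theorem 5.4 in \cite{MaNaTa} and the preceding rigidity theorem in this section. I would take $W=S^{2m}\mathbf C^2$ with its real invariant subspace $\mathbf R^{2m+1}=S^m_0\R^3$, identify the standard fibre $V_0$ via Frobenius reciprocity, and write down the normal decomposition of $\mathbf R^{2m+1}\oplus\mathbf R$ (exactly the weight decomposition appearing in the construction of $f_1$ above, with the trivial summand $\mathbf R_0\oplus\mathbf R$ playing the role of $\ker T$). By Theorem \ref{GenDW} (or its image-equivalence variant, Theorem \ref{imod}) the map $f$ is obtained from the standard map by $W$ through a semi-positive endomorphism $T$, i.e. $T=(\mathrm{id}+D)^{\frac12}$ with $D$ orthogonal to $\mathrm{GS}(\mathfrak m V_0,V_0)$; the constraint $\det A=0$ with parallel eigenspaces forces $\ker T=\mathbf R_0\oplus\mathbf R$, so the full subspace is exactly $(\mathbf R_0\oplus\mathbf R)^\perp$ and $n+2=2m+2$, giving $n=2m$ even. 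It remains to show $T|_{(\mathbf R_0\oplus\mathbf R)^\perp}=\mathrm{id}$ up to image equivalence: polarising the orthogonality relation $(D,gH(v_1,v_2))=0$ gives $(Dgv_1,gv_2)=0$ for all $g\in\SU(2)$ and $v_1,v_2\in V_0$, and since the decomposition is normal and $D$ is $\mathrm{U}(1)$-equivariant on each class-one summand, Schur's lemma forces $D$ to vanish on the relevant isotypic pieces, so $T$ acts as a scalar on $(\mathbf R_0\oplus\mathbf R)^\perp$; that scalar is absorbed by image equivalence, yielding that $f$ is image equivalent to $f_1$.

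The main obstacle I anticipate is the bookkeeping around the kernel of $T$ and the distinction between image and gauge equivalence. Unlike the EH case, where the kernel is controlled purely by dimension, here one must argue that $\det A=0$ together with $\nabla A=0$ forces $\ker T$ to be precisely the invariant trivial summand $\mathbf R_0\oplus\mathbf R$ and not some other $A$-eigenspace; this uses that the zero eigenspace of $A$ is parallel and $\SU(2)$-invariant, hence a subrepresentation, and the only one of the right dimension sitting inside $S^{2m}\C^2$ after adding the auxiliary trivial line is the one in $f_1$'s construction. A secondary subtlety is checking that the subspace $S^m_0\R^3\oplus\mathbf R$ genuinely globally generates the trivial $\mathbf R^2$-bundle with product connection in the sense required by Theorem \ref{GenDW}, and that the resulting $f$ lands in $\mathrm{Gr}_n(\mathbf R^{n+2})$ rather than a larger Grassmannian; this is the analogue of the verification that $f_1$ is well defined, already carried out in the paragraph preceding the statement, so it can be invoked rather than redone. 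Once these points are settled the Schur's-lemma step is routine and identical to the cited arguments.
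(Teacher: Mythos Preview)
Your proposal has a genuine gap in its use of Theorem \ref{GenDW}. That theorem requires the EH condition $A=-\mu\,\mathrm{id}_V$, and its conclusion that $W$ is a \emph{single} eigenspace of the Laplacian depends on this. Here $f$ is explicitly \emph{not} EH (indeed $\det A=0$ with $A\neq 0$), so you cannot apply the theorem to $f$ as stated; the space $\mathbf R^{2m+1}\oplus\mathbf R$ you propose for $W$ is a sum of two distinct eigenspaces (constants and the $m$-th spherical harmonics), and the do Carmo--Wallach parametrisation by $T$ is not available in that generality. Your attempted workaround, that the zero eigenspace of $A$ is ``$\SU(2)$--invariant, hence a subrepresentation'', presupposes equivariance of $f$, which is exactly what is to be proved.

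The paper's argument avoids this by \emph{first} splitting the bundle and only \emph{then} invoking the EH theory where it legitimately applies. Since $\nabla A=0$, the eigenbundle decomposition $V=V_1\oplus V_2$ (eigenvalues $0$ and $-2e(f)$) is parallel. The generalised Takahashi theorem applied to $V_1$ shows that the sections of $V_1$ coming from $\mathbf R^{n+2}$ are constant; fullness then forces the corresponding subspace $U_1\subset\mathbf R^{n+2}$ to be one--dimensional. On the complementary piece, $A|_{V_2}$ \emph{is} a scalar, so the map determined by $(V_2,U_2)$ is a genuine $(0,\mathrm{EH}(m))$ harmonic map into $\mathbf R\mathrm P^{2m}$, with $U_2\subset\mathbf R^{2m+1}\subset S^{2m}\mathbf C^2$. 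Now the rigidity argument via the normal decomposition $\mathrm{Im}\,B_p=\mathbf C_{2p}$ and Theorem 5.4 of \cite{MaNaTa} applies cleanly to this reduced problem. The evenness of $n$ and the identification with $f_1$ then follow by reassembling $U_1\oplus U_2$. The missing idea in your outline is precisely this reduction step: strip off the kernel of $A$ at the bundle level before calling on any do Carmo--Wallach--type classification.
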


\begin{proof}
The pull-back of the universal quotient bundle has the eigenspace decomposition of $A$, 
which is invariant under the pull-back connection, because $A$ is parallel. 
Since $\text{\rm det}\,A=0$, its only eigenvalues are 
$0$ and $-2e(f).$  
The orthogonal decomposition of $V \to \mathbf C\mathrm P^1$ is denoted by 
$V_1\oplus V_2$  corresponding to eigenvalues $0$ and $-2e(f)$, respectively.  
The real vector space $\mathbf R^{n+2}$ determines a family of sections 
of $V_1\to \mathbf C\mathrm P^1$ which, 
by the generalisation of the Theorem of Takahashi, are constant. 
Hence the image of $V_1 \to \mathbf C\mathrm P^1$ under the adjoint of the evaluation 
homomorphism gives a subspace $U_1$ of $\mathbf R^{n+2}$. 
It follows from the fullness of $f$ that  $U_1$ is of dimension one 
and so, $V_1\to \mathbf C\mathrm P^1$ is identified with $\underline{U_1}\to \mathbf C\mathrm P^1$. 
 
The orthogonal complement of $U_1$ is denoted by $U_2$. 
Using again the generalisation of the Theorem of Takahashi,
there exists an integer $m$ 
such that $e(f)=2\pi\{2m(m+1)\}$. 
Moreover, it follows from the fullness of $f$ that $U_2$ is a subspace of 
$\mathbf R^{2m+1}$ which is an invariant real subspace of 
$S^{2m}\mathbf C^2$.  
Notice that we now consider {\it real}-valued functions and so, 
the eigenspace of the Laplacian is $\mathbf R^{2m+1}$ for $m\in \mathbf Z_{\geqq 0}$. 

Therefore, by the discussion at the beginning of the section, the problem is equivalent to that of
classifying an ($0$,EH($m$)) harmonic map $\mathbf C\mathrm P^1\to\mathbf R\mathrm P^{2m},$ 
fulfilling the gauge condition.

The standard map induced by a rank-one, trivial vector bundle 
and  $\mathbf R^{2m+1}$ has the normal decomposition:
\[
\text{\rm Im}\,B_p=\mathbf C_{2p}, 
\qquad p\geq 1
\]
and then again the result follows as in the proof of  Theorem 5.4 \cite{MaNaTa}.
\end{proof}

\section{Applications}

In this final section we use our classification theorem 
(in particular, Corollary \ref{imodcor}) 
to give a new proof of the following classical result, originally
  stated in its different incarnations by Calabi \cite{Cal}, and do Carmo \& Wallach \cite{DoC-Wal}. 
\begin{thm}
  \label{CDW}
Let $i:S^2 \to S^n \subset \mathbf R^{n+1}$ 
be a full isometric minimal immersion, 
where $S^n$ is the unit sphere. 
Then $i$ is $\text{SU}(2)$-equivariant. 
\end{thm}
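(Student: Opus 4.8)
The plan is to reduce the statement about minimal immersions $S^2\to S^n$ to the classification of EH harmonic maps into real Grassmannians obtained in \S 4, and then invoke the equivariance built into the moduli space description. First I would recall that a full isometric minimal immersion $i:S^2\to S^n$ is a harmonic map, and by the Theorem of Takahashi its component functions span an eigenspace of the Laplacian on $S^2$; since $S^2=\SU(2)/\mathrm{U}(1)$ carries only the eigenspaces $S^{2m}\mathbf C^2$ (equivalently the real spaces $S^m_0\mathbf R^3$) with $m$ even, there is a well-defined even integer $2m$ with $e(i)$ the corresponding eigenvalue. The key move is to pass from the sphere to the quadric: the Gauss map (via Ruh--Vilms, Theorem 8.2, which is derived in this formalism) or, more directly, the construction of \S 6 identifies $i$ with an EH harmonic map of the form $[g]\mapsto g\,U^\perp$ into a real Grassmannian $\mathrm{Gr}_p(\mathbf R^{p+2})$, of degree $0$ and constant K\"ahler angle, where the underlying line bundle is the trivial one. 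Concretely, $i$ corresponds to a point of the moduli space $\mathbf M_{0,m}$ described in Theorem \ref{gmod1} and Theorem \ref{imod}.

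Next I would apply the rigidity results of \S 6--\S 7. For the totally real case, Theorem 7.1 (and its gauge-equivalence strengthening) shows that a totally real, full, $(0,\mathrm{EH}(m))$ minimal immersion into $\mathrm{Gr}_n(\mathbf R^{n+2})$ with $n=4m$ is the standard map induced by $S^m_0\mathbf R^3\subset S^{2m}\mathbf C^2$ up to (image, hence gauge) equivalence; the standard map is manifestly $\SU(2)$-equivariant since it is $[g]\mapsto g\,V_0^\perp$ for an $\SU(2)$-invariant subspace. Thus I must verify that a minimal immersion into a \emph{sphere} produces a \emph{totally real} map into the corresponding quadric/Grassmannian: this follows because the degree is $0$, so the pull-back of the universal quotient bundle carries the product connection, whence $f^*\omega_Q=0$ by the gauge condition, and conformality forces the image to be totally real --- exactly the argument opening \S 7. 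With $D=0$ being forced (the deformation space $\mathrm{GS}(V_0,V_0)^\perp$ vanishes for the full standard map into the maximal-dimensional target by Corollary \ref{rigidity}/Proposition \ref{pre_rigidity}(2)), we conclude $i$ is the standard minimal immersion and in particular $\SU(2)$-equivariant.

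The main obstacle I anticipate is the bookkeeping in the translation step: one must (i) match the normalisations so that the eigenvalue $e(i)=2\pi\{2m(m+1)\}$ for the sphere corresponds to the EH-constant $l=m$ in the quadric picture, (ii) check that fullness of $i$ in $S^n$ forces fullness in the corresponding Grassmannian (so that $n$ is indeed maximal, $n=4m$, and the rigidity corollaries apply), and (iii) confirm that the standard map into the sphere --- the classical $m$-th harmonic eigenmap --- is the one recovered, so that the conclusion "$\SU(2)$-equivariant" is literally the statement that $i\circ g = \rho(g)\circ i$ for the irreducible representation $\rho$ on $\mathbf R^{n+1}$. Once these identifications are pinned down, no new analysis is needed: the equivariance is not proved by a deformation argument but simply read off from the fact that the unique (up to congruence) map in $\mathbf M_{0,m}$ of maximal target dimension is the $\SU(2)$-orbit map. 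I would therefore structure the proof as: reduce to a quadric target via \S 7's totally-real correspondence; identify the degree-$0$ EH($m$) map with the real standard map by Theorem 7.1; observe standard maps are orbit maps; translate back.
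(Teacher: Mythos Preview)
Your proposal contains a genuine error in the translation step. The Gauss map $f$ of the composite immersion $I:S^2\to\mathbf R^{n+1}$ pulls the universal quotient bundle $Q\to\mathrm{Gr}_{n-1}(\mathbf R^{n+1})$ back to the tangent bundle $TS^2\cong\mathcal O(2)$, so $f$ has degree~$2$, not degree~$0$. Nothing in \S 7 then applies: Theorem~7.1 treats $(0,\mathrm{EH}(l))$ maps, and its standard map lands in $\mathrm{Gr}_{4l}(\mathbf R^{4l+2})$, not in a sphere. You have not supplied (and I do not see) any construction that turns a minimal immersion $i:S^2\to S^n$ into a degree-$0$ EH harmonic map of $\mathbf C\mathrm P^1$ into a complex quadric; the ``construction of \S 6'' you invoke concerns real standard maps of degree $2k$ into $\mathrm{Gr}_{2(|k|+l)-1}(\mathbf R^{2(|k|+l)+1})$, again not degree zero, and ``the underlying line bundle is the trivial one'' is simply false for the Gauss map.

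The paper stays on the degree-$2$ side throughout. After observing (via Ruh--Vilms and the Einstein property of $S^2$) that the Gauss map $f$ is EH harmonic and satisfies the gauge condition, it places $f$ in $\mathbf M_{2,l}$ for some $l$. For $k=2$ the direct sum in Theorem~\ref{gmod1}(2) collapses to the single summand $S^0\mathbf C^2$, so by Theorem~\ref{imod} and Corollary~\ref{imodcor} the image moduli $\mathbf M_{2,l}$ is exactly the one-parameter family $\{f_t\}_{t\in[0,1]}$ of $\SU(2)$-equivariant maps interpolating between the standard and the real standard map. Hence $f$ itself is $\SU(2)$-equivariant, and since $f$ is essentially the differential of $I$, so are $I$ and $i$. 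Your anticipated obstacles (i)--(iii) are therefore moot: the eigenvalue bookkeeping, fullness check, and identification with the spherical eigenmap all live on the degree-$2$ moduli, and equivariance is read off from Corollary~\ref{imodcor} rather than from any totally-real rigidity statement.
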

The proof of the theorem will follow from a reinterpretation (in the sense of \cite{Na-13})
of the well-known theory developed by Ruh \& Vilms \cite{Ruh-Vil}.
To this end, let $M$ be a Riemannian manifold and  
$I:M\to \mathbf R^{N}$ an isometric immersion. 
Using the inner product on $\mathbf R^{N}$, 
we consider a bundle homomorphism  
$ev:\underline{\mathbf R^{N}}  \to TM.$
In fact, $ev$ is the adjoint homomorphism of 
$dI:TM \to \underline{\mathbf R^N}$. 
Then, we have an exact sequence of vector bundles: 
\[
0 \to NM \to \underline{\mathbf{R}^{N}} \to TM \to 0,
\]
where  
$NM \to M$ is the {\it normal bundle} of $M.$ 
Denote the second fundamental form of the tangent 
bundle by $K\in \Omega^1(TM^{\ast}\otimes NM).$ 
Since $I$ is an isometric immersion,  $K$ is also regarded as the second fundamental form of 
submanifold geometry. 
Hence, it satisfies 
\[
K_X Y =K_Y X, \quad X,Y \in TM. 
\]
The bundle homomorphism $ev,$ together with 
the orientation of $M$ induces
a Gauss map $f:M \to \mathrm{Gr}_{p}(\mathbf R^{N})$:
\[
f(x)=\ker \,ev_x, \quad x \in M,  
\]
where $p=N -\dim \,M$.  
In this context (cf \S 2) the pull-back of the universal quotient bundle
$Q\to \mathrm{Gr}_p(\mathbf R^N)$ coincides with 
the tangent bundle of $M,$  
and 
$K$ can also be 
regarded as the pull-back of the second fundamental form 
of $Q\to \mathrm{Gr}_{p}(\mathbf R^{N})$ in the exact sequence
\[ 0 \to S \to \underline{\mathbf{R}^{N}} \to Q \to 0. \]
Let $\mathbf n$ denote the {\it mean curvature} of $I:M \to \mathbf R^{N}.$
 Explicitly,
$\mathbf n= \sum K_{e_i} e_i$, where $\{e_1,e_2, \cdots \}$ is an orthonormal frame of $
M$. 
The Gauss-Codazzi equations, \cite{Kob}, p.23,  
then yield 
\[
\nabla_X \mathbf n= \sum \nabla_X \left(K_{e_i} e_i\right)=
\sum (\nabla_X K)(e_i,e_i)=\sum (\nabla_{e_i}K)(e_i,X)
=K_{\tau(f)}X
\]
where $\tau(f)$ is the {\it tension field} of the Gauss map $f$. 
Hence, we recover Ruh \& Vilms result \cite{Ruh-Vil} 
\begin{thm}
Let $M$ be a Riemannian manifold and  
$I:M\to \mathbf R^{N}$ an isometric immersion. 
The Gauss map is denoted by $f:M \to Gr_p(\mathbf R^N)$.
Then 
the mean curvature of $I$ is parallel if and only if 
$f$ is a harmonic map.
\end{thm}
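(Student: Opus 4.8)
The plan is to establish the Ruh--Vilms theorem by deriving the displayed identity
\[
\nabla_X \mathbf n = K_{\tau(f)}X
\]
and then reading off the biconditional. The crucial input is the Codazzi equation, which in the present formalism is simply the statement $\nabla K = 0$ from Lemma \ref{sksymHK}(1) transported along $f$, together with the symmetry $K_XY = K_YX$ recorded above. First I would fix an orthonormal frame $\{e_i\}$ of $M$ that is normal at the point $x$ under consideration, so that all first covariant derivatives of the frame vanish at $x$; this removes the nuisance terms and lets one differentiate $\mathbf n = \sum_i K_{e_i}e_i$ termwise.

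The computation then proceeds in three short steps. Differentiating, $\nabla_X \mathbf n = \sum_i \nabla_X(K_{e_i}e_i) = \sum_i (\nabla_X K)(e_i,e_i)$ at $x$, using the normal frame. Next, since $\nabla K = 0$ (the second fundamental form of the universal quotient bundle is parallel, hence so is its pull-back by $f$, because harmonicity is not yet assumed and parallelism is a pointwise bundle identity), one rewrites $(\nabla_X K)(e_i,e_i)$ using the Codazzi symmetry $(\nabla_X K)(Y,Z) = (\nabla_Y K)(X,Z)$, obtaining $\sum_i (\nabla_{e_i}K)(e_i,X)$. The subtle point here is bookkeeping: $K$ is a $\mathrm{Hom}(S,Q)$-valued one-form, so ``$\nabla K$'' means the covariant exterior derivative with respect to the connection induced by $\nabla^S$ and $\nabla^Q$, and one must check that the Codazzi identity in submanifold geometry matches the vanishing of this exterior derivative; this is exactly the content of the Gauss--Codazzi equations cited to \cite{Kob}. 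Finally, $\sum_i (\nabla_{e_i}K)(e_i,X)$ is, by definition of the tension field $\tau(f) = \sum_i (\nabla^{f^{-1}TN}_{e_i}df)(e_i)$ and the identification of $df$ with $K$ under $T \cong S^* \otimes Q$, equal to $K_{\tau(f)}X$; here one uses that the trace defining $\mathbf n$ is taken with the same metric $g$ as the trace defining $\tau(f)$.

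Once the identity $\nabla_X \mathbf n = K_{\tau(f)}X$ holds for all $X$, the theorem is immediate: if $\tau(f) = 0$ then the right-hand side vanishes identically, so $\nabla \mathbf n = 0$; conversely, if $\nabla \mathbf n = 0$ then $K_{\tau(f)}X = 0$ for all $X$, and since $K$ (viewed as the map $TM \to \mathrm{Hom}(S,Q)$, i.e. as $df$ under the identification) is injective for an immersion, $\tau(f) = 0$, that is, $f$ is harmonic. I expect the main obstacle to be purely notational rather than conceptual: correctly matching the three distinct second fundamental forms in play --- the one of submanifold geometry, the pull-back $K$ of the one for $Q \to \mathrm{Gr}_p(\mathbf R^N)$, and the tension-field trace --- and verifying that the ``$\nabla K = 0$'' of Lemma \ref{sksymHK} genuinely yields the Codazzi symmetry after pull-back. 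Everything else is a one-line trace manipulation in a normal frame.
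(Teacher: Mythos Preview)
Your proposal is correct and follows essentially the same route as the paper: derive the identity $\nabla_X \mathbf n = K_{\tau(f)}X$ via the Codazzi symmetry and read off the biconditional. Two small slips worth fixing: (i) $K=\pi_S\,d\,i_Q$ is $\mathrm{Hom}(Q,S)$-valued, not $\mathrm{Hom}(S,Q)$-valued; (ii) in the converse, the injectivity you need is that of $\xi\mapsto K_\xi$ as a map $f^{-1}T\mathrm{Gr}\to \mathrm{Hom}(V,f^{-1}S)$, which follows from the identification $T\mathrm{Gr}\cong S^{*}\otimes Q$ together with Lemma~\ref{sksymHK}(2), not from $df$ being injective---$\tau(f)$ lives in $f^{-1}T\mathrm{Gr}$ and need not lie in the image of $df$.
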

Moreover, since $I$ is an isometric immersion, the Gauss map $f$ 
satisfies the gauge condition; in other words,
the pull-back connection is the Levi-Civita connection on $V \cong TM \to M$.  

Finally, we use the Gauss--Codazzi equations
to compute the mean curvature operator $A$ of $f:M \to \mathrm{Gr}_p(\mathbf R^N)$:
\[
AX=-\sum K_{e_i}^{\ast}K_{e_i}X=K_X^{\ast}\mathbf n-Ric^M(X), 
\] 
where $Ric^M$ is the Ricci operator of $M$. 

Next, let $i:M\to S^n \subset \mathbf R^{n+1}$ 
be an isometric minimal immersion, 
where $S^n$ is the unit sphere. 
By composition, we get an isometric immersion 
$I:M \to \mathbf R^{n+1}$ with parallel mean curvature $\mathbf n$. 
In this case, $\mathbf n(x)=-mI(x)$, where $m=\dim \,M$,  
and so, $K_X^{\ast}\mathbf n=-mX$.  
Summarising,

\begin{lemma}
Let $i:M\to S^n \subset \mathbf R^{n+1}$ 
be an isometric minimal immersion. 
Using the standard embedding $S^n \subset \mathbf R^{n+1}$, 
we get an isometric immersion 
$I:M \to \mathbf R^{n+1}$ with parallel mean curvature. 
 The Gauss map of $I$ is an EH harmonic mapping if and only if $M$ is an Einstein manifold. 
\end{lemma}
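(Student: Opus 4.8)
The plan is to deduce the statement from the formula $AX = K_X^{\ast}\mathbf n - \mathrm{Ric}^M(X)$ for the mean curvature operator of the Gauss map, established just above, once the hypotheses have been rephrased as properties of $A$.

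First I would collect the relevant input. Minimality of $i$ in $S^n$ gives, as recorded above, $\mathbf n = -mI$ with $m=\dim M$; this is a parallel section of the normal bundle of $I$, since $dI(X)=X$ is everywhere tangent to $M$. By the theorem of Ruh \& Vilms recalled above, the Gauss map $f:M\to\mathrm{Gr}_p(\mathbf R^{n+1})$ is then harmonic, and it satisfies the gauge condition, its pull-back of $Q$ being $TM\to M$ with the Levi-Civita connection. Hence $f$ is an EH harmonic map precisely when its mean curvature operator $A\in\Gamma(\mathrm{End}\,V)=\Gamma(\mathrm{End}\,TM)$ equals $-\mu\,\mathrm{id}$ for some non-negative constant $\mu$.

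Second, I would substitute the identity $K_X^{\ast}\mathbf n = -mX$, valid in the present minimal-in-the-sphere situation, into the Gauss--Codazzi formula to obtain $A = -m\,\mathrm{id}_{TM} - \mathrm{Ric}^M$. Then $A=-\mu\,\mathrm{id}$ is equivalent to $\mathrm{Ric}^M = (\mu-m)\,\mathrm{id}_{TM}$, that is, to $M$ being Einstein, and this settles both implications at once up to the sign of the constant. The remaining point --- more bookkeeping than a genuine obstacle --- is the non-negativity of the EH constant: if $\mathrm{Ric}^M = c\,\mathrm{id}$ then $A = -(m+c)\,\mathrm{id}$, and since $A$ is non-positive by Lemma~\ref{Nonposed} one has $m+c\geq 0$, so $\mu:=m+c$ is an admissible EH constant; conversely a non-negative $\mu$ reads off the Einstein constant $\mu-m$. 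I would also note that ``Einstein'' is to be read as $\mathrm{Ric}^M$ being a \emph{constant} multiple of the metric, which is automatic when $\dim M=2$ (the case $M=S^2$ that concerns us) and, for $\dim M\geq 3$, follows from Schur's lemma once $\mathrm{Ric}^M$ is pointwise proportional to the identity.
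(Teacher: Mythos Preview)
Your argument is correct and matches the paper's own reasoning: the lemma is stated in the paper as a direct ``summarising'' of the preceding computation, namely the formula $AX=K_X^{\ast}\mathbf n-\mathrm{Ric}^M(X)$ together with $K_X^{\ast}\mathbf n=-mX$ in the minimal-in-the-sphere case, so that $A=-m\,\mathrm{id}-\mathrm{Ric}^M$ is a constant multiple of the identity iff $M$ is Einstein. Your additional remarks on the sign of the EH constant and on Schur's lemma are fine supplementary bookkeeping that the paper leaves implicit.
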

Now, we can proceed with the\\

\noindent {\it Proof of Theorem \ref{CDW}.}\\
By composition, we consider an isometric immersion 
$I:S^2 \to \mathbf R^{n+1}$ with parallel mean curvature. 
Since $S^2$ is an Einstein manifold, 
the Gauss map of $I$ denoted by $f$ is an EH harmonic map 
with the gauge condition. 
Since the pull-back of the universal quotient bundle is identified with the tangent bundle,  
$f$ is of degree $2$.  
We use Theorem \ref{imod} and Corollary \ref{imodcor} to conclude that 
$f$ is an $\text{SU}(2)$-equivariant map. 
Since $f$ can be considered as the differential of $I$, 
$I$ and $i$ themselves are $\text{SU}(2)$-equivariant maps.
\hfill \qed

\end{document}